  \tikzset{mylabel/.style  args={at #1 #2  with #3}{
    postaction={decorate,
    decoration={
      markings,
      mark= at position #1
      with  \node [#2] {#3};
 } } } }
\let\@fnsymbol\@arabic
\theoremstyle{plain}
\newtheorem{theorem}{\bf Theorem}[section]
\newtheorem{corollary}[theorem]{Corollary}
\newtheorem{lemma}[theorem]{Lemma}
\newtheorem{theoremx}{Theorem}
\newtheorem{questionx}[theoremx]{Question}
\theoremstyle{definition}
\newtheorem{definition}[theorem]{Definition}
\newtheorem{notation}[theorem]{Notation}
\newtheorem{remark}[theorem]{Remark}
\newtheorem*{theorem*}{\bf Theorem}
\newcommand{\init}{\operatorname{in} }
\newcommand{\R}{\mathbb{R}}
\newcommand{\Min}{\operatorname{Min} }
\newcommand{\Spec}{\operatorname{Spec} }
\newcommand{\ord}{\mathrm{ord}}
\newcommand{\vars}{\mathrm{vars}}
\newcommand{\fpt}{\mathrm{fpt}}
\newcommand{\dfpt}{\mathrm{dfpt}}
\newcommand{\IN}{\mathrm{in}}
\newcommand{\p}{\mathfrak{p}}
\newcommand{\m}{\mathfrak{m}}
\newcommand{\n}{\mathfrak{n}}
\renewcommand{\a}{\mathfrak{a}}
\newcommand{\Hom}{\operatorname{Hom} }
\definecolor{mypink}{RGB}{215, 5, 234}
 \newcommand{\lcm}{\operatorname{lcm}}
\newcommand{\supp}{\operatorname{supp}}
\newcommand{\ZZ}{\ensuremath{\mathbb{Z}}}
\newcommand{\e}{\operatorname{e}}
\begin{document}
\title{F-singularities of polynomials with square-free support}
\author[A. Conca]{Aldo Conca} 
\address{Dipartimento di Matematica, Universit{\`a} di Genova, Via Dodecaneso 35, 16146 Genova, Italy}
\email{aldo.conca@unige.it}

\author[A. De Stefani]{Alessandro De Stefani}
\address{Dipartimento di Matematica, Universit{\`a} di Genova, Via Dodecaneso 35, 16146 Genova, Italy}
\email{alessandro.destefani@unige.it}

\author[L. N\'u\~nez-Betancourt]{Luis N\'u\~nez-Betancourt}
\address{Centro de Investigaci\'on en Matem\'aticas, Guanajuato, Gto., M\'exico}
\email{luisnub@cimat.mx}

\author[I. Smirnov]{Ilya Smirnov}
\address{BCAM -- Basque Center for Applied Mathematics, Mazarredo 14, 48009 Bilbao, Basque Country -- Spain}
\address{Ikerbasque, Basque Foundation for Science, Plaza Euskadi 5, 48009 Bilbao, Basque Country -- Spain}
\email{ismirnov@bcamath.org}

  \date{}

\begin{abstract}
We show that the intersection of the irreducible components of a hypersurface defined by a polynomial with square-free support has F-rational singularities in characteristic $p>0$. In particular, we obtain that hypersurfaces defined by irreducible polynomials with square-free support have F-rational singularities, positively answering a question of Bath, Musta\c{t}\u{a}, and Walther. 
\end{abstract}

\maketitle

\section{Introduction}

We say that a polynomial $f$ over a field $K$ is \emph{square-free supported} if every variable appearing in $f$ has degree at most one, or, equivalently, if every monomial appearing in $f$ is square-free. 
The interest in square-free supported polynomials comes from matroid support polynomials or, more generally, matroidal polynomials \cite{BW}. 
Recently Bath, Musta\c{t}\u{a}, and Walther \cite{BMW} proved that 
if $K$ is an algebraically closed field of characteristic zero
and $f$ is an irreducible square-free supported polynomial in $n$ variables, then the  hypersurface $Z \subseteq \mathbb{A}^n_K$ cut out by $f$ has rational singularities. 
The celebrated connection between F-rational and rational singularities, 
due to the work of Smith \cite{Smith}  and Hara \cite{Hara} (see also \cite{MehtaSrinivas}), makes the following question quite natural.

\begin{questionx}[{\cite[Question 1.2]{BMW}}] \label{Quest} If $K$ is an algebraically closed field of characteristic $p>0$, and $Z \subseteq \mathbb{A}^n_K$ is the hypersurface defined by an irreducible square-free supported polynomial, does $Z$ have F-rational singularities?
\end{questionx}

Our main theorem provides a positive answer to Question~\ref{Quest}. In fact, we show a more general result that applies for certain classes of complete intersections. Moreover, we remove the assumption that $K$ is algebraically closed. 

\begin{theoremx}[{Theorem~\ref{ThmReducible}}] \label{mainthm} 
Let $K$ be a field of characteristic $p > 0$ and $S=K[x_1,\ldots,x_n]$. Let $f \in S$ be a square-free supported polynomial with irreducible factors $f_1, \ldots,  f_t$. Then $S/(f_1, \ldots, f_t)$ is F-rational.
\end{theoremx}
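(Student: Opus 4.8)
The plan is to induct on the number $n$ of variables, using the rigid combinatorial structure that square-free support forces on the factorization, and to close the induction with a Fedder-type Frobenius-splitting computation. Here is the structure. Since every monomial of $f$ is square-free, $f$ is linear in each variable; hence a variable cannot occur in two distinct irreducible factors and no factor can be repeated (either would force a square into $f$). Therefore the $f_i$ have pairwise disjoint variable sets $V_1,\dots,V_t$, each $f_i$ is itself square-free supported, each occurs with multiplicity one, and, writing $V_0$ for the remaining variables and $I:=(f_1,\dots,f_t)$,
\[
R:=S/I\ \cong\ K[V_0]\otimes_K\bigotimes_{i=1}^{t}\bigl(K[V_i]/(f_i)\bigr).
\]
In particular $f_1,\dots,f_t$ is a regular sequence, so $R$ is a complete intersection -- Cohen--Macaulay and Gorenstein -- and since each $f_i$ has degree $\le1$ in each of its variables (hence is separable in each), $R$ is reduced. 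Replacing $K$ by the finitely generated, hence F-finite, subfield generated by the coefficients of $f$ (which we treat as a routine base-change reduction), we may assume $K$ is F-finite; then, $R$ being Gorenstein and excellent, F-rationality of $R$ is equivalent to strong F-regularity, and I will prove the latter.

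Now the induction. If no $f_i$ is non-constant, $R$ is a polynomial ring and there is nothing to prove; otherwise choose a variable $x_n$ occurring in a factor, say $f_t$, and write $f_t=x_ng+h$ with $g=\partial f_t/\partial x_n$ and $h=f_t|_{x_n=0}$ in $K[V_t\setminus\{x_n\}]$, both square-free supported, $g\neq0$. If $g\in K^{\times}$ we may eliminate $x_n$ via $x_n=-h/g$ and identify $R$ with $K[x_1,\dots,x_{n-1}]/(f_1,\dots,f_{t-1})$, which is F-rational by induction. Otherwise $f_t$ is irreducible and genuinely involves $x_n$, so $f_t\nmid g$ and $g$ is a nonzerodivisor on $R$; inverting $g$ and eliminating $x_n$ gives
\[
R_g\ \cong\ \bigl(K[x_1,\dots,x_{n-1}]/(f_1,\dots,f_{t-1})\bigr)_{\bar g},
\]
where $f_1\cdots f_{t-1}$ is square-free supported with irreducible factors $f_1,\dots,f_{t-1}$. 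By the inductive hypothesis this ring is F-rational, hence strongly F-regular (it is Gorenstein), and strong F-regularity localizes; thus $R$ is strongly F-regular on $D(g)$.

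To pass from $D(g)$ to all of $R$, the standard criterion of Hochster--Huneke -- given the nonzerodivisor $g$ with $R_g$ strongly F-regular -- reduces the problem to splitting the $R$-linear map $R\to F^e_*R$, $1\mapsto F^e_*g$, for some $e$. By Fedder's lemma for the complete intersection $R=S/I$ together with the identity $(I^{[q]}:I)=\bigl(f_1^{q-1}\cdots f_t^{q-1}\bigr)+I^{[q]}$ for the regular sequence $f_1,\dots,f_t$ (with $q=p^e$), this splitting holds precisely when for every maximal ideal $\m\supseteq I$ there is a $q$ with $g\cdot(f_1\cdots f_t)^{q-1}\notin\m^{[q]}$. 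Translating (which preserves square-free support), we may assume $\m=(x_1,\dots,x_n)$ and $f_i(0)=0$ for all $i$, so the task becomes: the polynomial $\bigl(\prod_{i<t}f_i^{q-1}\bigr)\cdot\bigl(f_t^{q-1}\,\partial f_t/\partial x_n\bigr)$ has a monomial with all exponents $<q$ and nonzero coefficient. Since the $f_i$ lie in disjoint variable sets, no cancellation occurs between blocks and it suffices to treat each factor separately. For $i<t$, pick any nonconstant monomial $x^{(A)}$ of $f_i$: then $\prod_{j\in A}x_j^{q-1}$ occurs in $f_i^{q-1}$ with coefficient $c_A^{q-1}\neq0$ (only the constant tuple $(A,\dots,A)$ produces it) and all its exponents equal $q-1<q$.

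The remaining block $f_t^{q-1}\,\partial f_t/\partial x_n$ is where square-free support is genuinely used, and establishing what is needed there is the main obstacle. Expanding in $x_n$,
\[
f_t^{q-1}\cdot\tfrac{\partial f_t}{\partial x_n}=\sum_{a=0}^{q-1}\binom{q-1}{a}\,x_n^{a}\,g^{a+1}h^{q-1-a},
\]
and since $g,h$ do not involve $x_n$ and $\binom{q-1}{a}\not\equiv0\pmod p$, it is enough to find one $a$ for which $g^{a+1}h^{q-1-a}$ has a monomial with all exponents $<q$ and nonzero coefficient. In the surviving case $g,h$ are nonzero, nonconstant, square-free supported, and -- crucially, because $f_t$ is irreducible and not a single variable -- $f_t$ is divisible by no variable, so no variable divides both $g$ and $h$ (otherwise such a variable would appear with exponent exactly $q$ in every monomial of every $g^{a+1}h^{q-1-a}$). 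The claim is that this structural restriction, together with multilinearity, guarantees that for $q$ large enough some choice of $a$ yields a surviving low-exponent monomial; I expect the careful proof of this non-vanishing -- and its uniformity over all maximal ideals $\m$, which over a general field need not be of the form $(x_1-a_1,\dots,x_n-a_n)$ -- to be the technical heart of the argument.
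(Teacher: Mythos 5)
Your overall strategy (disjoint-variable factorization of the $f_i$, induction on $n$, localization at one element plus a Fedder/Glassbrenner computation for the complete intersection, using $(I^{[q]}:I)=(f_1\cdots f_t)^{q-1}+I^{[q]}$) is the same as the paper's, and your degree argument for the blocks $i<t$ (any monomial $x^{(A)}\in\supp(f_i)$ forces the coefficient of $\prod_{j\in A}x_j^{q-1}$ in $f_i^{q-1}$ to be $c_A^{q-1}$) is correct. But there are two genuine gaps. First, your base-change reduction goes the wrong way: replacing $K$ by the finitely generated subfield $K_0$ generated by the coefficients and proving the statement over $K_0$ does not give it over $K$, because F-rationality need not ascend along an arbitrary (possibly inseparable) field extension $K_0\subseteq K$; and your later step ``translating, we may assume $\m=(x_1,\ldots,x_n)$'' needs $K$-rational maximal ideals, which you yourself flag as unresolved. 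The paper handles exactly this by proving that square-free supported irreducible polynomials are \emph{geometrically} irreducible (they have degree one in some variable, and $\gcd$'s are preserved under field extension), then passing \emph{up} to the algebraic closure, proving strong F-regularity there, and descending F-rationality along the faithfully flat map $R_L\to R_{\overline{L}}$ via Datta--Murayama. Without such an argument your proof covers only F-finite fields with all relevant maximal ideals rational, not the stated generality.

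Second, the Fedder-type non-vanishing for the block containing $x_n$ is precisely the technical heart, and you leave it as an expectation rather than a proof; as stated it is also harder than necessary because of your choice of multiplier $c=\partial f_t/\partial x_n$: since $g$ and $h$ may share variables, a monomial of $g^{a+1}h^{q-1-a}$ can acquire exponent $q$ in a shared variable, and distinct factorizations can cancel, so ``multilinearity'' alone does not settle it. The paper's proof sidesteps this by a different dichotomy and multiplier: if every factor has a linear monomial at the chosen point then $R_\m$ is regular; otherwise some factor, say $f_1$, lies in $\m^2$, one takes $c=x_n$ for a variable $x_n\in\vars(f_1)$, gets condition (1) of Glassbrenner by induction applied over $K(x_n)[x_1,\ldots,x_{n-1}]$ (all factors kept, $f_1$ still a non-unit), and gets condition (2) by choosing $u\in\supp(f_1)$ of minimal degree among monomials not divisible by $x_n$ (nonempty since $f_1$ is irreducible): every square-free monomial of $f_1$ dividing $u^{p-1}$ divides $u$ and hence equals $u$, so $u^{p-1}$ survives in $f_1^{p-1}$ with coefficient $c_u^{p-1}\neq 0$, and combining with one surviving monomial per remaining factor gives $x_nf^{p-1}\notin\m^{[p]}$ already for $e=1$. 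To complete your write-up you would either need to prove your claim about $g^{a+1}h^{q-1-a}$ or switch to a multiplier of this kind, for which both halves of the criterion come out cleanly.
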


We use Theorem~\ref{mainthm} to also give a positive characteristic version 
of \cite[Theorem 1.3]{BMW}, itself generalizing a result on Feynman diagram polynomials \cite[Theorem~6.44]{BW}.

\begin{theoremx}[{Corollary~\ref{CorSqfreeModification}}] \label{notmainthm}
Let $K$ be a field of positive characteristic, $S=K[x_1,\ldots,x_n]$, and 
$g,h \in S$ be square-free supported homogeneous polynomials. 
If $g$ is irreducible and does not divide $h$, then
for every $a_1, \ldots, a_n \in K$ the quotient 
$S/(g(1 + a_1 x_1 + \cdots + a_nx_n) + h)$ is F-rational.
\end{theoremx}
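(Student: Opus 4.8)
The plan is to reduce the statement to Theorem~\ref{mainthm} by homogenizing and performing a linear change of coordinates. Set $d = \deg g$, so $\deg h = d+1$, and $L := a_1 x_1 + \cdots + a_n x_n$; then $F := g(1+L) + h$ has homogeneous components $g$ in degree $d$ and $gL + h$ in degree $d+1$. Introducing a new variable $z$, let $\widetilde{F} \in K[z, x_1, \ldots, x_n]$ be the homogenization of $F$ to degree $d+1$, so that
\[
\widetilde{F} = g\cdot(z + a_1 x_1 + \cdots + a_n x_n) + h, \qquad \widetilde{F}(1, x_1, \ldots, x_n) = F .
\]
The crucial point is that $w := z + a_1 x_1 + \cdots + a_n x_n$, together with the $x_i$, is a new system of coordinates on $K[z, x_1, \ldots, x_n]$ in which $\widetilde{F}$ becomes $gw + h$. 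Since $g$ and $h$ are square-free supported and $w$ is a fresh variable, $gw+h$ is square-free supported; and since $g$ is irreducible with $g \nmid h$, the polynomial $gw + h$ is primitive of degree one in $w$ over the UFD $K[x_1, \ldots, x_n]$, hence irreducible. Thus Theorem~\ref{mainthm} (with the single irreducible factor $gw+h$) shows that $K[w, x_1, \ldots, x_n]/(gw+h)$, and therefore $K[z, x_1, \ldots, x_n]/(\widetilde{F})$, is F-rational.

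Next I would dehomogenize. As F-rationality is preserved by localization, $\bigl(K[z, x_1, \ldots, x_n]/(\widetilde{F})\bigr)[z^{-1}]$ is F-rational. The rescaling $x_i \mapsto z x_i$ is an automorphism of $K[z, z^{-1}, x_1, \ldots, x_n]$, and since $g, h$ are homogeneous of degrees $d, d+1$ while $z + \sum a_i x_i \mapsto z(1 + \sum a_i x_i)$, it carries $(\widetilde{F})$ to $(z^{d+1}F) = (F)$. Hence
\[
\bigl(K[z, x_1, \ldots, x_n]/(\widetilde{F})\bigr)[z^{-1}] \;\cong\; K[z, z^{-1}, x_1, \ldots, x_n]/(F) \;=\; (S/(F))[z, z^{-1}],
\]
so $(S/(F))[z, z^{-1}]$ is F-rational. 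Finally, the map $S/(F) \hookrightarrow (S/(F))[z, z^{-1}]$ is faithfully flat with geometrically regular fibers, along which F-rationality descends; alternatively, one argues prime by prime, noting that $(S/(F))_{\p}$ is F-rational whenever its Nagata extension $(S/(F))_{\p}(z)$ is. Either way, $S/(F)$ is F-rational, which is the claim.

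The genuinely new input is the homogenization-plus-coordinate-change in the first paragraph, which manufactures the square-free supported irreducible polynomial $gw+h$ to which Theorem~\ref{mainthm} applies; everything else is bookkeeping. The step that most needs care is the descent from $(S/(F))[z,z^{-1}]$ back to $S/(F)$: this rests on standard permanence properties of F-rationality --- stability under localization, the equivalence of F-rationality for $A$ and for $A[z]$, and faithfully flat descent along maps with regular fibers --- rather than on anything specific to square-free support, and presumably has already been recorded as a lemma earlier in the paper.
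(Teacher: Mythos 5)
Your proposal is correct and follows essentially the same route as the paper's proof of Corollary~\ref{CorSqfreeModification}: homogenize to $\widetilde{F}=g(z+\sum a_ix_i)+h$, change coordinates so that it becomes the square-free supported irreducible polynomial $gw+h$ (irreducible via the degree-one-over-a-UFD argument, the paper's Lemma~\ref{lemma1}), apply the main F-rationality result, pass between $(\widetilde{F})$ and $(F)$ via the automorphism $x_i\mapsto x_iz^{\pm1}$ of $K[z^{\pm1},x_1,\ldots,x_n]$, and descend along the faithfully flat map $S/(F)\to (S/(F))[z,z^{-1}]$ exactly as the paper does (citing \cite[Proposition A.5]{DattaMurayama}). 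The only differences are cosmetic: the paper runs the same argument after an arbitrary base field extension to obtain geometric F-rationality, and it uses the hypothesis $\deg(h)=\deg(g)+1$ (present in Corollary~\ref{CorSqfreeModification}) which you correctly built into your homogenization.
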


Through the technique of reduction modulo $p$, our results recover and improve on the results of Bath, Musta\c{t}\u{a}, and Walther \cite{BMW}. Compared to \cite{BMW}, Theorems~\ref{mainthm},\ref{notmainthm} allow to remove the assumption that the base field is algebraically closed also in characteristic zero (see Corollaries~\ref{Char0}, \ref{CorSqfreeModificationRational}).

The second part of our paper explores the singularities of square-free supported polynomials using the \emph{defect of the F-pure threshold}  (see Definition \ref{DefDFPT}), a numerical invariant recently defined to study the geometry of F-finite F-split schemes~\cite{DSNBS}. We compute this invariant for rings associated to square-free supported polynomials.

\begin{theoremx}[{Theorem \ref{ThmDfptOne} \& Corollary \ref{CorDfptMany}}] \label{main dfpt} Let $K$ be an algebraically closed field of characteristic $p>0$, and $S=K[x_1,\ldots,x_n]$. Let $f \in S$ be a square-free supported polynomial with irreducible factors $f_1, \ldots,  f_t$. Then,
\[
\dfpt(S/(f)) = \e(S/(f))-1
\quad \& \quad
\dfpt(S/(f_1, \ldots,  f_t)) = \e(S/(f))-t.
\]
\end{theoremx}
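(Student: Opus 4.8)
The plan is to reduce the computation of the defect of the F-pure threshold to the F-rationality statement of Theorem~\ref{mainthm} together with a careful analysis of the F-pure (F-split) structure of the ring $R = S/(f)$, respectively $R = S/(f_1,\ldots,f_t)$. First, recall that $\dfpt$ is defined (Definition~\ref{DefDFPT}) for F-finite F-split rings, so the initial step is to verify that $R$ is indeed F-split; since $f$ has square-free support, the generator $f$ — or the product $f_1 \cdots f_t$, which equals $f$ up to a unit since square-free-supported polynomials are reduced — is itself a square-free monomial combination, and one checks directly (e.g.\ via Fedder's criterion: $f^{p-1} \notin \mathfrak{m}^{[p]}$, using that some square-free monomial of full support or maximal support survives) that $R$ is F-split. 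The same criterion, applied to the complete intersection $(f_1,\ldots,f_t)$, handles the second ring; here one uses that the $f_i$ are pairwise coprime square-free-supported polynomials so that a suitable product of monomials, one from each $f_i$, avoids $\mathfrak{m}^{[p]}$.

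Next I would relate $\dfpt$ to the multiplicity and the dimension. The heuristic is that for an F-rational (hence Cohen--Macaulay, hence equidimensional) ring, the ``defect'' measures how far the F-pure threshold sits below its maximal possible value, and for a hypersurface ring of multiplicity $e = \e(S/(f))$ defined by a polynomial of degree $e$ the extremal F-split behaviour forces $\fpt$ to be as large as allowed, so that $\dfpt(S/(f)) = e - 1$. Concretely, I expect the argument to run: (i) by Theorem~\ref{mainthm}, $S/(f)$ is F-rational, so in particular strongly F-regular on the punctured spectrum and Cohen--Macaulay; (ii) combining this with the general lower bound for $\dfpt$ in terms of $\e - 1$ coming from \cite{DSNBS}, equality will follow once one produces a single element of the maximal ideal, or a parameter, realizing the extremal test-ideal containment — and the square-free support gives an explicit such element (a variable, or a sum of variables appearing in $f$). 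For the complete intersection $S/(f_1,\ldots,f_t)$, the multiplicity is still $\e(S/(f)) = \deg f = \sum \deg f_i$ because passing from $f$ to the reduced complete intersection does not change the top-dimensional part, but the codimension jumps from $1$ to $t$, and each additional equation $f_i$ contributes a drop of $1$ to the defect, yielding $\dfpt = \e(S/(f)) - t$.

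The technical heart, and the step I expect to be the main obstacle, is establishing the exact value rather than an inequality in one direction: the lower bound $\dfpt \ge \e - 1$ (resp.\ $\ge \e - t$) should come fairly directly from producing explicit elements exploiting square-free support, but the matching upper bound requires showing that no element of the maximal ideal can push the invariant higher, i.e.\ that the F-rational (strongly F-regular on the punctured spectrum) locus is ``maximally good.'' I anticipate handling this via a localization/deformation argument: $\dfpt$ is governed by the non-strongly-F-regular locus, which for $S/(f)$ with $f$ square-free-supported is contained in the singular locus; a dimension count on that locus, together with the behaviour of $\dfpt$ under hyperplane sections (again using that cutting by a generic variable preserves square-free support and drops both $\e$ and the relevant dimension by $1$), should close the induction. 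The base cases — $f$ a single square-free monomial, or $f$ irreducible of small degree — are direct computations, and the hypothesis that $K$ is algebraically closed is used to ensure the combinatorial/geometric analysis of the singular locus is not obscured by arithmetic of the ground field.
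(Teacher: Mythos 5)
Your proposal does not reach the actual content of the theorem, and the plan it sketches would not close. The paper's computation of $\dfpt(R_\m)$ for a hypersurface rests on one specific mechanism that is absent from your outline: after translating $\m$ to the origin, the lowest-degree form $\init(f)$ is again square-free supported, hence $S/\init(f)$ is F-split by Fedder, and then \cite[Theorem 5.8]{DSNBS} identifies $\dfpt(R_\m)$ with $\dfpt(S/\init(f))$, which for a graded F-split hypersurface is read off from the top nonvanishing degree of $H^{n-1}_\eta(S/\init(f))$ (an $a$-invariant computation), giving $\ord_\m(f)-1=\e(R_\m)-1$. You replace this by the heuristic that ``extremal F-split behaviour forces $\fpt$ to be as large as allowed,'' plus a proposed induction via hyperplane sections and a dimension count on the singular locus; no such general principle exists, and nothing in your sketch pins down the exact value. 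Indeed your two inequalities are oriented the wrong way: exhibiting explicit elements of $\m^{\lfloor\lambda(p^e-1)\rfloor}$ with a splitting bounds $\fpt$ from \emph{below}, i.e.\ $\dfpt$ from \emph{above}; the hard direction (no splitting beyond the threshold) is exactly the one your plan leaves to an unspecified ``dimension count.'' Also note that F-rationality plays no role here: your step (i) asserts $S/(f)$ is F-rational, which is false when $f$ is reducible ($S/(f)$ is not a domain), and the paper's proof of the $\dfpt$ formulas never invokes Theorem~\ref{mainthm}.

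For the complete intersection formula, ``each additional equation contributes a drop of $1$'' is not an argument. The paper's route uses Lemma~\ref{LemmaFactors} (the $f_i$ involve pairwise disjoint sets of variables) to write $S/(f_1,\ldots,f_t)$ as a tensor product over $K$ of the hypersurface rings $K[\vars(f_i)]/(f_i)$ and a polynomial ring, then applies the additivity of $\dfpt$ under such tensor products \cite[Corollary 3.23]{DSNBS}, Theorem~\ref{ThmDfptOne} for each factor, and the associativity formula $\e((S/(f))_\m)=\sum_i\e((S/(f_i))_\m)$. Finally, since $\dfpt$ and $\e$ of the non-local rings are defined as maxima over the spectrum, the global identity $\dfpt(S/(f_1,\ldots,f_t))=\e(S/(f))-t$ requires showing that $\max_\m \e((S/(f))_\m)$ is attained at a maximal ideal containing \emph{all} the $f_i$; the paper proves this by a separate contradiction argument that builds such a maximal ideal from the disjointness of the variable sets. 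Your proposal does not address this local-to-global step at all.
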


\subsection*{Acknowledgments}
 The first and second authors were partially supported by the MIUR Excellence Department Project CUP D33C23001110001, and by INdAM-GNSAGA. The third author thanks CONAHCYT, Mexico, for its support with Grants CBF 2023-2024-224 and CF-2023-G-33.
The fourth author was supported by the
Ramon y Cajal Fellowship RYC2020-028976-I, funded by MCIN/AEI/10.13039/501100011033 and by FSE ``invest in your future'',
 PID2021-125052NA-I00, funded by MCIN/AEI/10.13039/501100011033, and 
 EUR2023-143443, funded by MCIN/AEI/10.13039/501100011033 and the European Union NextGenerationEU/PRTR.

\section{Strong F-regularity of square-free supported polynomials}

\begin{notation}
Let $S=K[x_1,\ldots,x_n]$, and $f\in S$. We let $\supp(f)$ be the support of $f$, that is, the set of monomials appearing in the expression of $f$ with non-zero coefficient. We also let $\vars(f)=\{x_k\;|\: x_k \hbox{ divides some }u\in\supp(f)\}$.
\end{notation}

\begin{definition}
Let $K$ be a field, and $S=K[x_1,\ldots,x_n]$. We say that a polynomial $f \in S$ is \emph{square-free supported} if its support consists of square-free monomials.
\end{definition}

\begin{remark}
Our terminology differs from \cite{BMW}, which uses the term ``square-free''. We decided to add the word ``supported'' to distinguish from polynomials of $S$ without multiple irreducible factors, which are also usually referred as square-free.
\end{remark}

We will now prove several useful properties of square-free supported polynomials. The first is that this property is stable upon a change of variables: 
if $f \in K[x_1, \ldots, x_]$ is a square-free polynomial and $y_i = x_i + a_i$, with $a_i \in K$ for $1 \leq i \leq n$, then $f$ remains square-free supported also as an element of $K[y_1, \ldots, y_n]$ \cite[Lemma~3.2]{BMW}.

\begin{lemma}\label{LemmaFactors}
 Let $K$ be a field, $S=K[x_1,\ldots,x_n]$ and $f \in S$ be a square-free supported polynomial with irreducible factors $f_1,\ldots,f_t$.
 Then
 \begin{enumerate}
 \item  $f_1,\ldots,f_t$ are square-free supported polynomials, and 
 \item $\vars(f_i)\cap\vars(f_j)=\varnothing$ for every $i\neq j$.
 \end{enumerate}
\end{lemma}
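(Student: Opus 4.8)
The plan is to reduce both statements to the elementary fact that in the polynomial ring $S=K[x_1,\ldots,x_n]$ over a field --- hence an integral domain --- the degree in any single variable is additive under multiplication: $\deg_{x_k}(gh)=\deg_{x_k}(g)+\deg_{x_k}(h)$ for all nonzero $g,h\in S$ and every $k$. The two things to keep in mind are that, by definition, $f$ is square-free supported if and only if $\deg_{x_k}(f)\le 1$ for every $1\le k\le n$, and that for any nonzero $g\in S$ one has $x_k\in\vars(g)$ if and only if $\deg_{x_k}(g)\ge 1$.

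First I would fix the meaning of ``irreducible factors''. Writing the factorization $f=c\,f_1^{a_1}\cdots f_t^{a_t}$ with $c\in K^{\times}$ and $a_i\ge 1$: since each $f_i$ is irreducible, it is in particular non-constant, so it involves some variable $x_k$, and then $a_i\le a_i\deg_{x_k}(f_i)\le\deg_{x_k}(f)\le 1$ forces $a_i=1$. Hence $f=c\,f_1\cdots f_t$ with the $f_i$ pairwise non-associate, and by additivity $\deg_{x_k}(f)=\sum_{i=1}^{t}\deg_{x_k}(f_i)$ for every $k$.

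Given this, part (1) is immediate: if some $f_i$ were not square-free supported, there would be a variable $x_k$ with $\deg_{x_k}(f_i)\ge 2$, whence $\deg_{x_k}(f)\ge\deg_{x_k}(f_i)\ge 2$, contradicting that $f$ is square-free supported. Part (2) is equally short: if $x_k\in\vars(f_i)\cap\vars(f_j)$ for some $i\ne j$, then $\deg_{x_k}(f_i)\ge 1$ and $\deg_{x_k}(f_j)\ge 1$, so $\deg_{x_k}(f)\ge\deg_{x_k}(f_i)+\deg_{x_k}(f_j)\ge 2$, again a contradiction.

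I do not expect a genuine obstacle here: the whole argument rests on additivity of $\deg_{x_k}$ over a domain. The only point that deserves a moment's care is the bookkeeping in the second paragraph --- ruling out repeated irreducible factors and absorbing the scalar $c$ --- so that the phrase ``square-free supported polynomial with irreducible factors $f_1,\ldots,f_t$'' may legitimately be read as $f=c\,f_1\cdots f_t$ with distinct $f_i$; once that is settled, both conclusions drop out.
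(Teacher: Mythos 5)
Your proposal is correct and follows essentially the paper's argument: part (2) is verbatim the paper's degree count $\deg_{x_j}(f)=\sum_s\deg_{x_j}(f_s)\ge 2$, and your part (1) just replaces the paper's lexicographic leading-monomial argument by the same single-variable degree additivity, which works equally well since square-free supportedness is exactly the condition $\deg_{x_k}(f)\le 1$ for all $k$. The explicit check that each irreducible factor occurs with multiplicity one is a nice touch the paper leaves implicit, and it is needed later when one writes $f$ (up to a scalar) as $f_1\cdots f_t$.
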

In particular, $S/(f_1, \ldots, f_t)$ is a complete intersection.
\begin{proof}
We show  $(1)$ by contradiction. Suppose that $f_i$ is not square-free supported for some $i$. Let $<$ denote a  lexicographical monomial order such that the greatest variable has a degree bigger than $1$ in a monomial in $f_i$. Then, the leading monomial of $f$ is divisible by the leading monomial of $f_i$. Hence, $f$ is not square-free supported, a contradiction. 

For $(2)$, by contradiction assume that there exist $j \in \{1,\ldots,n\}$ and $i_1 \ne i_2$ such that $x_j \in \vars(f_{i_1}) \cap \vars(f_{i_2})$. Let $\deg_j(-)$ denote the degree in the variable $x_j$. We have $\deg_j(f) = \sum_{s=1}^t \deg_j(f_s) \geq \deg_j(f_{i_1}) + \deg_j(f_{i_2}) \geq 2$, so that $x_j^2$ divides a monomial in the support of $f$. A contradiction.
 \end{proof}

\subsection{Geometric irreducibiity}
We will now prove that a square-free supported polynomial $f \in S=K[x_1,\ldots,x_n]$ is geometrically irreducible, that is, its image in $S_L \coloneqq S \otimes_K L \cong L[x_1,\ldots,x_n]$ is irreducible for every field extension $L$ of $K$.

Let $A$ be a UFD, and $a,b \in A$ with $a \ne 0$. We let $\gcd_A(a,b)$ be the greatest common divisors of $a$ and $b$ in $A$, which is defined up to a unit of $A$. In particular, $\gcd_A(a,b)=1$ means that the only common divisors of $a$ and $b$ are the units of $A$. We also let $\lcm_A(a,b)$ be the least common multiple of $a$ and $b$; recall that $\lcm_A(a,b)\gcd_A(a,b) = ab$.

\begin{lemma} \label{lemma1} Let $A$ be a domain, and $f \in A[x]$ be a polynomial of degree $1$. If we write $f=ax+b$ with $a,b \in A$, then the following are equivalent:
\begin{enumerate}
\item $f$ is reducible;
\item there exists $c \in A$ not invertible such that $(a,b) \subseteq (c)$.
\end{enumerate}
Moreover, if $A$ is a UFD, then the two conditions are also equivalent to $\gcd_A(a,b) \ne 1$.
\end{lemma}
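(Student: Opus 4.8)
The plan is to establish the equivalence (1) $\Leftrightarrow$ (2) by directly unwinding the definition of reducibility in the polynomial ring $A[x]$ over a domain, and then to derive the UFD statement as an easy consequence of the characterization in (2). First I would recall the standard facts that hold when $A$ is a domain: $A[x]$ is again a domain, $\deg$ is additive on products, and the units of $A[x]$ are precisely the units of $A$. Since $\deg f = 1$, the element $f$ is nonzero and is not a unit, so ``$f$ is reducible'' means exactly that there is a factorization $f = gh$ in $A[x]$ with neither $g$ nor $h$ a unit.

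For (1) $\Rightarrow$ (2): in such a factorization, additivity of degree forces one factor, say $g$, to be a constant $c \in A$ and the other to have degree $1$; since $g$ is a non-unit of $A[x]$, it is a non-unit of $A$. Writing $h = a'x + b'$ and comparing coefficients in $ax+b = c(a'x+b')$ gives $a = ca'$ and $b = cb'$, that is, $(a,b) \subseteq (c)$. For (2) $\Rightarrow$ (1): given $(a,b) \subseteq (c)$ with $c$ a non-unit, write $a = ca'$ and $b = cb'$, so that $f = c(a'x+b')$; because $a \neq 0$ we have $a' \neq 0$, hence $a'x+b'$ has degree $1$ and is not a unit, and $f$ is exhibited as a product of two non-units.

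For the UFD addendum, note that if $\gcd_A(a,b) \neq 1$ then $d \coloneqq \gcd_A(a,b)$ is a non-unit with $a,b \in (d)$, which is exactly (2); conversely, if $(a,b) \subseteq (c)$ with $c$ a non-unit, then $c$ divides both $a$ and $b$, hence divides $\gcd_A(a,b)$, so $\gcd_A(a,b)$ cannot be a unit. I do not expect a genuine obstacle here: the only two points that require a moment of care are that the extracted constant factor is a non-unit of $A$ and not merely of $A[x]$ (this uses that $A$ is a domain, so that units of $A[x]$ are units of $A$), and that the complementary factor $a'x+b'$ in (2) $\Rightarrow$ (1) is not a unit, which is precisely where the hypothesis $\deg f = 1$, i.e.\ $a \neq 0$, is used.
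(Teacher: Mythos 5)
Your proposal is correct and follows essentially the same route as the paper's proof: extract a constant non-unit factor by degree considerations for (1) $\Rightarrow$ (2), reassemble the factorization $f=c(a'x+b')$ for (2) $\Rightarrow$ (1), and observe that the UFD statement is just a restatement of (2). The paper is merely terser (it says ``by degree reasons'' where you spell out the unit and degree arguments), so no substantive difference.
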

\begin{proof}
The last statement is equivalent to (2) if $A$ is a UFD. We now prove the equivalence of (1) and (2). First, if $f$ is reducible, then by degree reasons we must have that $f=(a_1x+b_1)c$ with $a_1,b_1,c\in A$ and $c$ not invertible. We conclude that $a=a_1c$ and $b=b_1c$, so that $(a,b) \subseteq (c)$ with $c$ not invertible.

Now assume (2). We can write $a=a_1c$ and $b=b_1c$, so that $f=(a_1x+b_1)c$ is a factorization of $f$, which is then reducible.
\end{proof}


\begin{lemma} \label{lemma 2} Let $K$ be a field, and $S=K[x_1,\ldots,x_n]$. Let $K \subseteq L$ be a field extension, and $S_L=L[x_1,\ldots,x_n]$. Given polynomials $0\ne f,g \in S$, we have that $\gcd_S(f,g)$ and $\gcd_{S_L}(f,g)$ generate the same ideal in $S_L$. In particular, $\gcd_S(f,g)=1$ if and only if $\gcd_{S_L}(f,g) = 1$.
\end{lemma}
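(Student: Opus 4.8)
The plan is to first reduce the ideal statement to the corresponding coprimality statement, and then prove that coprimality in a polynomial ring over a field is preserved under field extension by a dimension count in the regular ring $S_L$.

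For the reduction, write $d=\gcd_S(f,g)$ and $f=df_0$, $g=dg_0$ with $\gcd_S(f_0,g_0)=1$. Since $S_L=L[x_1,\ldots,x_n]$ is a UFD, the identity $\gcd_{S_L}(ac,bc)=c\cdot\gcd_{S_L}(a,b)$ (up to units of $S_L$, i.e. up to nonzero elements of $L$) gives $\gcd_{S_L}(f,g)=d\cdot\gcd_{S_L}(f_0,g_0)$ up to a unit. Hence $\gcd_S(f,g)$ and $\gcd_{S_L}(f,g)$ generate the same ideal of $S_L$ if and only if $\gcd_{S_L}(f_0,g_0)=1$. So it suffices to prove: if $\gcd_S(p,q)=1$ for nonzero $p,q\in S$, then $\gcd_{S_L}(p,q)=1$. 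Granting this, the first assertion of the lemma holds, and then the ``in particular'' follows in both directions, since $\gcd_S(p,q)=1 \iff \gcd_S(p,q)S_L=S_L \iff \gcd_{S_L}(p,q)S_L=S_L \iff \gcd_{S_L}(p,q)=1$.

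Now assume $\gcd_S(p,q)=1$ for nonzero $p,q$. If $(p,q)=S$, then $(p,q)S_L=S_L$ by base change and we are done; so assume $(p,q)\subsetneq S$, which forces $n\geq 2$ (for $n\leq 1$, $\gcd_S(p,q)=1$ already gives $(p,q)=S$). In the UFD $S$, a prime of height $\leq 1$ is either $(0)$ or principal, generated by a prime element $\pi$, and $(p,q)\subseteq(\pi)$ would mean $\pi\mid\gcd_S(p,q)=1$; hence $(p,q)$ lies in no prime of height $\leq 1$, i.e. $\height_S(p,q)\geq 2$. As $S$ is Cohen--Macaulay and equidimensional, $\dim S/(p,q)=n-\height_S(p,q)\leq n-2$. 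Base change does not change this dimension: $S_L/(p,q)S_L\cong (S/(p,q))\otimes_K L$, and for any finitely generated $K$-algebra $A$ one has $\dim(A\otimes_K L)=\dim A$ (take a Noether normalization $K[y_1,\ldots,y_e]\hookrightarrow A$ with $e=\dim A$ and tensor with $L$, which preserves finiteness and injectivity). Therefore $\dim S_L/(p,q)S_L\leq n-2$, so $\height_{S_L}(p,q)\geq 2$, and running the UFD argument backwards in $S_L$ shows that no prime element of $S_L$ divides both $p$ and $q$; that is, $\gcd_{S_L}(p,q)=1$.

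The only real content here is the equivalence, in a polynomial ring over a field, between $\gcd_S(p,q)=1$ and $\height_S(p,q)\geq 2$ (together with the stability of Krull dimension of a finitely generated $K$-algebra under field extension); everything else is formal bookkeeping. An alternative, more hands-on route avoids dimension theory: localize $S$ at $K[x_1,\ldots,x_{n-1}]\smallsetminus\{0\}$ to land in the PID $K(x_1,\ldots,x_{n-1})[x_n]$, use Gauss's lemma to see that $\gcd_S(p,q)=1$ persists there, extract a Bézout relation $h=ap+bq$ with $0\neq h\in K[x_1,\ldots,x_{n-1}]$, base change, and close by induction on $n$ to control the contents of $p$ and $q$ over $L[x_1,\ldots,x_{n-1}]$; the main nuisance in that approach is keeping track of content versus primitive part.
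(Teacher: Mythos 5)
Your proof is correct, but it takes a genuinely different route from the paper's. You reduce to the coprime case by factoring out $d=\gcd_S(f,g)$, and then translate coprimality into a geometric statement: in the UFD $S$, $\gcd_S(p,q)=1$ is equivalent to $(p,q)$ avoiding all height-one primes, i.e.\ $\height_S(p,q)\geq 2$, equivalently $\dim S/(p,q)\leq n-2$; you then invoke the invariance of Krull dimension of a finitely generated $K$-algebra under base field extension (via Noether normalization) to push the codimension bound to $S_L$ and run the height-one argument backwards. The paper instead works directly with the least common multiple: since $(f)\cap(g)=(\lcm_S(f,g))$ and intersection of ideals commutes with the (faithfully) flat extension $S\to S_L$, the element $\lcm_S(f,g)$ generates $(f S_L)\cap(g S_L)=(\lcm_{S_L}(f,g))$, so it agrees with $\lcm_{S_L}(f,g)$ up to a unit, and then $\gcd=fg/\lcm$ gives the statement for the gcd in one step, with no reduction to the coprime case and no dimension theory. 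The paper's argument is shorter and needs only flatness plus the identity $\gcd\cdot\lcm=fg$; yours is heavier (Cohen--Macaulayness/catenarity of $S$, Noether normalization) but makes transparent the geometric content --- coprimality as a codimension-two condition --- which is a perfectly sound and somewhat more conceptual way to see why the property descends and ascends along field extensions. One small point worth making explicit in your write-up: in the final step, if $(p,q)S_L=S_L$ the dimension formula $\dim S_L/(p,q)S_L=n-\height_{S_L}(p,q)$ does not literally apply, but the conclusion still holds since a containment $(p,q)S_L\subseteq(\pi)$ with $\pi$ prime would force $\dim S_L/(p,q)S_L\geq n-1$, which your bound excludes; as phrased, ``running the UFD argument backwards'' covers this, so the gap is only expository.
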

\begin{proof}
First, note that $(fS) \cap (gS) = (\lcm_S(f,g))$. On the other hand, since $S \to S_L$ is faithfully flat, we have that 
\[
(\lcm_S(f,g))S_L = ((f) \cap (g))S_L = (fS_L) \cap (gS_L) = (\lcm_{S_L}(f,g)).
\]
This means that there exists $\lambda \in L \smallsetminus \{0\}$ such that $\lcm_S(f,g) = \lambda \lcm_{S_L}(f,g)$. But then $\gcd_{S_L}(f,g) = \displaystyle \frac{fg}{\lcm_{S_L}(f,g)} = \lambda \frac{fg}{\lcm_S(f,g)}$ $= \lambda \gcd_S(f,g)$, as desired.
\end{proof}

\begin{theorem} \label{thm irred} Let $K$ be a field, and $S=K[x_1,\ldots,x_n]$. Let $f \in T = S[x]$ be a polynomial of degree one in the variable $x$, that is, $f=gx+h$ for some $g,h \in S$ with $g \ne 0$. Let $K \subseteq L$ be a field extension, $S_L=L[x_1,\ldots,x_n]$ and $P=S_L[x]$. If $f$ is irreducible in $T$, then it is irreducible in $P$.
\end{theorem}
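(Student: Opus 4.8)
The plan is to translate irreducibility of the degree-one polynomial $f = gx + h$ into a coprimality statement about its coefficients $g, h \in S$, and then to observe that this coprimality statement is unaffected by the extension $K \subseteq L$.

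First I would record the hypotheses needed to invoke the earlier lemmas: $S$ and $S_L = S \otimes_K L$ are polynomial rings over fields, hence UFDs, so Lemma~\ref{lemma1} applies with $A = S$ and with $A = S_L$; the polynomial $f$ has $x$-degree exactly $1$, so it is a nonzero nonunit of both $T = S[x]$ and $P = S_L[x]$; and the leading coefficient $g$ remains nonzero in $S_L$ since $S \hookrightarrow S_L$ is injective. With this in place, Lemma~\ref{lemma1} gives the equivalences
\[
f\ \text{is irreducible in}\ T \iff \gcd_S(g,h) = 1, \qquad f\ \text{is irreducible in}\ P \iff \gcd_{S_L}(g,h) = 1 .
\]
Thus the theorem reduces to the implication $\gcd_S(g,h) = 1 \Rightarrow \gcd_{S_L}(g,h) = 1$. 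If $h \ne 0$, this is exactly (a part of) Lemma~\ref{lemma 2}, which in fact proves both directions using faithful flatness of $S \to S_L$ together with the relation $\lcm_S(g,h)\gcd_S(g,h) = gh$. If $h = 0$, then $f = gx$, and irreducibility of $gx$ in $T$ forces $g$ to be a unit of $S$, that is, a nonzero element of $K$; but then $g$ is also a unit of $S_L$, so $gx$ is irreducible in $P$ as well.

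I do not anticipate a serious obstacle: once Lemmas~\ref{lemma1} and~\ref{lemma 2} are available, the argument is largely bookkeeping. The two points that require a little attention are verifying that the hypotheses of those lemmas survive base change (namely that $f$ stays a nonzero nonunit and $g$ stays nonzero over $L$), and handling the degenerate case $h = 0$ separately, since it lies outside the ``$f, g$ nonzero'' scope of Lemma~\ref{lemma 2}. The genuine content of the statement is the descent of the $\gcd$ condition, and that is supplied by the faithful flatness argument inside Lemma~\ref{lemma 2}.
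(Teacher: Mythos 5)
Your proposal is correct and follows essentially the same route as the paper's own proof: reduce irreducibility of $f=gx+h$ to the condition $\gcd(g,h)=1$ via Lemma~\ref{lemma1}, and then transfer that condition from $S$ to $S_L$ via the faithful-flatness argument of Lemma~\ref{lemma 2}. Your separate treatment of the degenerate case $h=0$ is a small extra point of care not spelled out in the paper, but it does not change the argument.
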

\begin{proof}
By Lemma \ref{lemma1} we have that $f$ is irreducible in $P$ if and only if $\gcd_{S_L}(g,h)=1$. Our assumptions and Lemma \ref{lemma1} guarantee that $\gcd_S(g,h)=1$, and by Lemma \ref{lemma 2} this is preserved when extending the scalars to $S_L$.
\end{proof}


\begin{corollary}\label{cor sqfree irred}
Let $K$ be a field, and $S=K[x_1,\ldots,x_n]$. If $f \in S$ is irreducible and square-free supported, then $S/fS$ is geometrically irreducible.
\end{corollary}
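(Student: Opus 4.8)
The plan is to deduce the corollary from Theorem~\ref{thm irred} by peeling off one variable. First I would note that an irreducible element of $S$ is in particular a non-unit, hence not a nonzero constant, so $\vars(f)\neq\varnothing$; after relabeling the variables we may assume $x_n\in\vars(f)$. Since $f$ is square-free supported, the variable $x_n$ occurs in $f$ with degree exactly one, so we can write $f=g x_n+h$ with $g,h\in K[x_1,\dots,x_{n-1}]$ and $g\neq 0$.

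Next I would apply Theorem~\ref{thm irred} with the ring $K[x_1,\dots,x_{n-1}]$ in the role of ``$S$'' and with ``$x$''~$=x_n$. The polynomial $f=g x_n+h$ has degree one in $x_n$ and is irreducible in $T=K[x_1,\dots,x_{n-1}][x_n]=S$ by hypothesis, so the theorem gives that $f$ is irreducible in $P=(K[x_1,\dots,x_{n-1}]\otimes_K L)[x_n]=L[x_1,\dots,x_n]=S_L$ for every field extension $K\subseteq L$. The substance of Theorem~\ref{thm irred} being used here is that irreducibility of a degree-one polynomial is controlled by the condition $\gcd(g,h)=1$ (Lemma~\ref{lemma1}), and that this condition is unaffected by extension of the base field (Lemma~\ref{lemma 2}).

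Finally, $S_L$ is a UFD, so the irreducibility of $f$ in $S_L$ forces $fS_L$ to be a prime ideal; hence $(S/fS)\otimes_K L\cong S_L/fS_L$ is a domain, in particular irreducible. Since this holds for every field extension $L$ of $K$, the scheme $\Spec(S/fS)$ is geometrically irreducible.

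I do not expect a genuine obstacle here — this really is a corollary. The only step that calls for any thought is the opening reduction: one must observe that being square-free supported is exactly what guarantees $f$ can be written as a degree-one polynomial in one of its variables over the polynomial ring in the remaining variables, with nonzero leading coefficient, which is precisely the hypothesis of Theorem~\ref{thm irred}. After that the argument is purely formal.
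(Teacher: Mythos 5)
Your argument is correct and is essentially the paper's proof: the paper likewise observes that a square-free supported polynomial has degree at most one in any variable of its support and then invokes Theorem~\ref{thm irred}, leaving implicit the details (nonempty support, writing $f=gx_n+h$ with $g\neq 0$, and passing from irreducibility of $f$ in $S_L$ to irreducibility of $\Spec(S_L/fS_L)$) that you spell out.
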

\begin{proof}
By definition, a square-free supported polynomial has degree at most one in {\it any} of the variables appearing in its support. Thus the claim follows from Theorem~\ref{thm irred}.
\end{proof}

\subsection{Positive characteristic methods} Let $R$ be a Noetherian ring of prime characteristic $p$. For $e>0$, the $e$th iteration of the Frobenius map $F^e \colon R \to R$, defined as $F^e(f) = f^{p^e}$, induces an $R$-module structure on the ring itself given by restriction of scalars. If we denote by $F^e_*(R)$ this $R$-module structure, and we denote by $F^e_*(r)$ an element of such a module, we have that $F^e_*(f_1)+F^e_*(f_2) = F^e_*(f_1+f_2)$ and $f_1F^e_*(f_2) = F^e_*(f_1^{p^e}f_2)$ for all $f_1,f_2 \in R$.

\begin{definition}
Let $R$ be a Noetherian ring of prime characteristic $p$. We say that $R$ is:
\begin{enumerate}
\item \emph{F-finite} if $F^e_* (R)$ is a finitely generated $R$-module for some (equivalently, all) $e>0$.
\item \emph{F-split} if there exists an $R$-linear map $\phi\colon F_*(R)\to R$
such that $F_*(1)\mapsto 1$.
\item \emph{F-rational} if every parameter ideal of $R$ is tightly closed. Here, $I=(f_1, \ldots, f_t)$ is called a parameter ideal if, for any prime ideal $I \subseteq \p$, the images of $f_1, \ldots, f_n$ in $R_\p$ form a part of a system of parameters in $R_\p$. 
\item \emph{strongly F-regular} if it is F-finite, and for every $c\in R\smallsetminus\bigcup_{\p\in\Min(R)} \p$ there exists $e\in \ZZ_{>0}$ and an $R$-linear map $\phi\colon F^e_*(R)\to R$ such that  $F^e_* (c)\mapsto 1$.
\end{enumerate}
\end{definition}


\begin{remark}
Strongly F-regular rings are clearly F-split. It is known that a strongly F-regular ring is F-rational, a Gorenstein F-finite F-rational ring is strongly F-regular, and if $R$ is F-rational and local or standard graded, then it is a normal domain \cite[Theorems 4.2 \& 5.5]{HH}.
\end{remark}

We now recall Fedder's criterion to test whether a ring is F-split.

\begin{theorem}[{\cite[{Lemma 1.6 \& Corollary 1.7}]{Fedder}}]
Let $S$ be a Noetherian F-finite regular ring of prime characteristic $p>0$. Let $I\subseteq S$ be an ideal, and $R=S/I$. There exists an isomorphism of $F_*(R)$-modules 
\[
\displaystyle \Hom_R(F^e_*(R),R) \cong \frac{IF^e_*(S):_{F^e_*(S)} F^e_*(I)}{IF^e_*(S)}.
\]
Furthermore, if $(S,\m)$ is a local ring, then $R$ is F-split if and only if $I^{[p^e]}:_SI\not\subseteq \m^{[p^e]}$.
\end{theorem}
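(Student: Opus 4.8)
The plan is to make the $F^e_*(R)$-module $\Hom_R(F^e_*(R),R)$ explicit by transporting, along the surjection $S\to R$, the standard description of $\Hom_S(F^e_*(S),S)$ for the regular ring $S$. First I would record two reductions. Since the $S$-module structure on $F^e_*(R)$ factors through $S\to R$, every $S$-linear map $F^e_*(R)\to R$ is automatically $R$-linear, so that $\Hom_R(F^e_*(R),R)=\Hom_S(F^e_*(R),R)$. And since $S\to R$ is surjective, $I\cdot F^e_*(S)=F^e_*(I^{[p^e]})$, hence $F^e_*(S)/IF^e_*(S)\cong F^e_*(S/I^{[p^e]})$ and the target of the asserted isomorphism is identified with $F^e_*\big((I^{[p^e]}:_S I)/I^{[p^e]}\big)$.

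The key structural input is that, because $S$ is F-finite and regular, $F^e_*(S)$ is a finitely presented, locally free $S$-module and $\Hom_S(F^e_*(S),S)$ is free of rank one as an $F^e_*(S)$-module; I would fix a generator $\Phi$, which is then surjective onto $S$ (when $S=K[x_1,\dots,x_n]$ with $K$ perfect, $\Phi$ is concretely the $S$-linear projection sending $F^e_*(x_1^{p^e-1}\cdots x_n^{p^e-1})$ to $1$ and the other monomial basis elements to $0$). Since $F^e_*(S)$ is finitely presented and locally free, $\Hom_S(F^e_*(S),-)$ commutes with $-\otimes_S R$, so
\[
\Hom_S(F^e_*(S),R)\;\cong\;\Hom_S(F^e_*(S),S)\otimes_S R\;\cong\;F^e_*(S)\otimes_S R\;\cong\;F^e_*(S/I^{[p^e]}),
\]
and under this chain the class of $F^e_*(u)$ corresponds to the map $\phi_u\colon F^e_*(v)\mapsto \overline{\Phi(F^e_*(uv))}$ with values in $S/I=R$.

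I would then isolate the pointwise lemma: for $w\in S$ one has $\Phi(F^e_*(Sw))\subseteq I$ if and only if $w\in I^{[p^e]}$. The implication ``$\Leftarrow$'' is immediate from $\Phi(IF^e_*(S))=I\cdot\Phi(F^e_*(S))=I$, while ``$\Rightarrow$'' holds because $\Phi(F^e_*(Sw))\subseteq I$ says precisely that the map $F^e_*(v)\mapsto\Phi(F^e_*(wv))$ lies in $\Hom_S(F^e_*(S),I)=I\cdot\Hom_S(F^e_*(S),S)$, which, under $\Hom_S(F^e_*(S),S)\cong F^e_*(S)$, corresponds to $F^e_*(w)\in IF^e_*(S)=F^e_*(I^{[p^e]})$. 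Now from $0\to F^e_*(I)\to F^e_*(S)\to F^e_*(R)\to 0$ and left-exactness of $\Hom_S(-,R)$, the module $\Hom_S(F^e_*(R),R)$ is the submodule of those $\phi\in\Hom_S(F^e_*(S),R)$ that vanish on $F^e_*(I)$; in the description above, $\phi_u$ vanishes on $F^e_*(I)$ if and only if $\Phi(F^e_*(uI))\subseteq I$, and applying the lemma to $w=ug_1,\dots,ug_m$ for generators $g_1,\dots,g_m$ of $I$ (using that $uI$ is an ideal) this is equivalent to $uI\subseteq I^{[p^e]}$, i.e.\ $u\in(I^{[p^e]}:_S I)$. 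This yields the asserted isomorphism, which is $F^e_*(S)$-linear since the identification $\Hom_S(F^e_*(S),S)\cong F^e_*(S)$ is.

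For the local statement, $R$ is local with maximal ideal $\m/I$, and (fixing any $e\geq1$, all the ``$e$-Frobenius split'' conditions being equivalent to being F-split) $R$ is F-split exactly when the evaluation map $\Hom_R(F^e_*(R),R)\to R$ at $F^e_*(1)$ has $1$ in its image; that image is an ideal $J\subseteq R$, and $1\in J$ if and only if $J\not\subseteq\m/I$. Under the isomorphism just established, evaluation at $F^e_*(1)$ sends the class of $F^e_*(u)$ to $\overline{\Phi(F^e_*(u))}$, so $J\not\subseteq\m/I$ is equivalent to $\Phi\big(F^e_*((I^{[p^e]}:_S I))\big)\not\subseteq\m$; using the pointwise lemma with $\m$ in place of $I$, together with $\Phi(F^e_*(\m^{[p^e]}))\subseteq\m$ and the fact that $(I^{[p^e]}:_S I)$ is an ideal, this is equivalent to $(I^{[p^e]}:_S I)\not\subseteq\m^{[p^e]}$, as claimed. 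The only genuinely non-formal ingredient here is the structural fact that $\Hom_S(F^e_*(S),S)$ is free of rank one over $F^e_*(S)$ (triviality of the relative canonical module of $S^{p^e}\hookrightarrow S$) and that its generator is surjective; everything else is careful bookkeeping of the module structures across $S\to R$ and of the colon/Frobenius-power dictionary.
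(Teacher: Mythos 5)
There is nothing to compare against here: the paper does not prove this statement, it quotes it from Fedder's paper (Lemma 1.6 and Corollary 1.7) and only uses it through the local criteria (Theorem~\ref{Glassbrenner} and Corollary~\ref{cor: CI}). Judged on its own terms, your argument is the standard proof of Fedder's lemma and criterion, and it is essentially correct: the identification $\Hom_R(F^e_*(R),R)=\Hom_S(F^e_*(R),R)$, the use of finite projectivity of $F^e_*(S)$ to commute $\Hom_S(F^e_*(S),-)$ with $-\otimes_S R$, the pointwise lemma characterizing when $\Phi(F^e_*(Sw))\subseteq I$ (whose forward direction correctly uses $\Hom_S(F^e_*(S),I)=I\cdot\Hom_S(F^e_*(S),S)$ and the freeness of $\Hom_S(F^e_*(S),S)$ to recover $w\in I^{[p^e]}$), and the reduction of F-splitness to the image ideal of evaluation at $F^e_*(1)$ not lying in $\m/I$ are all sound, as is the parenthetical remark that a splitting for one $e$ yields one for $e=1$. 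The one point to flag is your "only non-formal ingredient": for an arbitrary Noetherian F-finite regular ring $S$, $\Hom_S(F^e_*(S),S)$ is in general only an invertible $F^e_*(S)$-module (it is $F^e_*$ of a twist by a power of the canonical module), and freeness with a surjective generator $\Phi$ is guaranteed when $S$ is local, complete, or a polynomial ring over an F-finite field. So, strictly speaking, your proof (like the usual statement of Fedder's lemma) should either be read locally or under a hypothesis trivializing that twist; since the paper applies the result only in the local and polynomial-ring settings, this caveat does not affect anything downstream, but it is worth making explicit if you state the first isomorphism for a general regular $S$.
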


Fedder's criterion is a very effective tool to establish whether a quotient of a regular local ring is F-split. 
In our work, we will use its version for strong F-regularity, due to Glassbrenner, which improves on the Hochster--Huneke criterion \cite[Theorem 3.3]{HoHu}.

\begin{theorem}[{\cite[Theorem 2.3]{Glassbrenner}}] \label{Glassbrenner}
Let $(S,\m)$ be an F-finite regular local ring, and $I \subseteq S$ be an ideal. Let $R=S/I$. Then $R$ is strongly F-regular if and only if 
there for some $f \in S$ whose image in $R$ does not belong to any minimal prime, $R_f$ is strongly F-regular and 
there exists $e>0$ such that $f(I^{[p^e]}:_SI) \not\subseteq \m^{[p^e]}$.
\end{theorem}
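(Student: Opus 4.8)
The statement is Glassbrenner's strong F-regularity criterion. The plan is to first convert the ideal-membership condition involving $S$, $I$ and $\m$ into the existence of a Frobenius splitting of $R$ taking a prescribed value, and then to deduce strong F-regularity of $R$ from that of the principal localization $R_{\bar f}$ by the usual ``spreading out'' argument, where $\bar f$ denotes the image of $f$ in $R$.

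First I would set up a Fedder-type dictionary. Fix $e > 0$ and let $\Phi$ be a generator of the free rank-one $F^e_*(S)$-module $\Hom_S(F^e_*(S), S)$, which exists because $S$ is regular, hence F-finite and Gorenstein. Under the isomorphism $\Hom_R(F^e_*(R), R) \cong (I^{[p^e]}:_S I)/I^{[p^e]}$ recorded in the Fedder criterion quoted above, the class of $u$ corresponds to the $R$-linear map sending $F^e_*(\bar s)$ to $\overline{\Phi(F^e_*(us))}$. I would combine this with the elementary fact that, for $g \in S$ and an ideal $J \subseteq S$, the functional $F^e_*(s) \mapsto \Phi(F^e_*(gs))$ has image inside $J$ exactly when $g \in J^{[p^e]}$ (used once with $J = I$, to make the previous sentence precise, and once with $J = \m$), together with the remark that over a local ring a homomorphism whose value at a given element is a unit can be rescaled so that this value is $1$. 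The conclusion is: for any $f \in S$ there exist $e > 0$ and an $R$-linear map $\phi \colon F^e_*(R) \to R$ with $\phi(F^e_*(\bar f)) = 1$ if and only if $f(I^{[p^e]}:_S I) \not\subseteq \m^{[p^e]}$ for some $e$. This reduces the theorem to the intrinsic statement that $R$ is strongly F-regular if and only if there is some $\bar f \in R$ lying outside every minimal prime such that $R_{\bar f}$ is strongly F-regular and some map $F^e_*(R) \to R$ carries $F^e_*(\bar f)$ to $1$.

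For the forward implication of this intrinsic statement I would take $f = 1$: then $R_{\bar f} = R$, and $R$ being strongly F-regular is F-split, so such a $\phi$ with $\phi(F^e_*(1)) = 1$ exists. For the converse, fix $\theta \colon F^{e_0}_*(R) \to R$ with $\theta(F^{e_0}_*(\bar f)) = 1$; composing the $R$-linear map $R \to F^{e_0}_*(R)$, $1 \mapsto F^{e_0}_*(\bar f)$, with $\theta$ gives the identity of $R$, so $R$ is F-split (in particular reduced). Now take any $d \in R$ outside every minimal prime. Then $d$ is also outside every minimal prime of the strongly F-regular ring $R_{\bar f}$, so there is an $R_{\bar f}$-linear $\psi \colon F^{e_1}_*(R_{\bar f}) \to R_{\bar f}$ with $\psi(F^{e_1}_*(d)) = 1$. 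Because $F^{e_1}_*(R)$ is a finitely generated $R$-module, $\Hom_R(F^{e_1}_*(R), -)$ commutes with localization at $\bar f$, so clearing denominators yields an $R$-linear $\chi \colon F^{e_1}_*(R) \to R$ with $\chi(F^{e_1}_*(d)) = \bar f^{\,L}$ for some $L \ge 0$. Iterating $\theta$ produces $\theta^{(k)} \colon F^{ke_0}_*(R) \to R$ with $\theta^{(k)}(F^{ke_0}_*(\bar f^{\,m_k})) = 1$, where $m_k = 1 + p^{e_0} + \cdots + p^{(k-1)e_0}$; choosing $k$ with $m_k \ge L$ and precomposing with multiplication by $\bar f^{\,m_k - L}$ gives $\theta' \colon F^{ke_0}_*(R) \to R$ with $\theta'(F^{ke_0}_*(\bar f^{\,L})) = 1$. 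Then $\theta' \circ F^{ke_0}_*(\chi) \colon F^{ke_0 + e_1}_*(R) \to R$ sends $F^{ke_0 + e_1}_*(d)$ to $\theta'(F^{ke_0}_*(\bar f^{\,L})) = 1$, which is precisely the splitting required by strong F-regularity of $R$ (together with the automatic F-finiteness of $R = S/I$).

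I expect the Fedder dictionary to be the main obstacle: one has to be careful that ``some $F^e_*(R) \to R$ sends $F^e_*(\bar f)$ to a unit'' matches the exact condition $f(I^{[p^e]}:_S I) \not\subseteq \m^{[p^e]}$ and not a slightly weaker or stronger one. This hinges on the explicit description of the generator $\Phi$ of $\Hom_S(F^e_*(S), S)$ and on the membership criterion relating $g \in J^{[p^e]}$ to $\Phi$ mapping $F^e_*(gS)$ into $J$. Once those are in hand, the rest is the routine mechanism for propagating a Frobenius splitting from a principal localization back to the whole ring.
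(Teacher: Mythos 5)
The paper itself offers no proof of this statement: it is imported verbatim from Glassbrenner's paper (cited as [Theorem 2.3] there), so there is no internal argument to compare yours against. Your proof is correct, and it follows what is essentially the standard (and Glassbrenner's own) route: a Fedder-type dictionary plus the Hochster--Huneke localization trick. The dictionary step is fine because $F^e_*(S)$ is free over the regular local ring $S$, so for the generator $\Phi$ of $\Hom_S(F^e_*(S),S)$ one has $\Phi(F^e_*(gS)) \subseteq J$ iff $g \in J^{[p^e]}$; since $(I^{[p^e]}:_S I)$ is an ideal, this does give exactly the equivalence between $f(I^{[p^e]}:_S I) \not\subseteq \m^{[p^e]}$ and the existence of $\phi \in \Hom_R(F^e_*(R),R)$ with $\phi(F^e_*(\bar f))$ a unit, hence $=1$ after rescaling. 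The forward direction via $f=1$ and the converse via $\theta^{(k)}$, the choice $m_k \ge L$, and the composition $\theta' \circ F^{ke_0}_*(\chi)$ all check out, and F-finiteness of $R=S/I$ is automatic. Two minor touch-ups: the sentence deducing F-splitness only exhibits $R$ as a direct summand of $F^{e_0}_*(R)$ via $1 \mapsto F^{e_0}_*(\bar f)$, not a splitting of Frobenius itself; to get the latter precompose $\theta$ with multiplication by $F^{e_0}_*(\bar f)$ (in any case this remark, and reducedness, are never used afterwards). And in the clearing-denominators step one a priori obtains $\bar f^{\,m}\chi(F^{e_1}_*(d)) = \bar f^{\,L}$ for some $m \ge 0$, so one should replace $\chi$ by $\bar f^{\,m}\chi$; this is harmless and does not affect the rest of the argument.
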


We note that the computation is easier for complete intersections.

\begin{corollary}\label{cor: CI}
Let $(S,\m)$ be an F-finite regular local ring of prime characteristic $p>0$, and $f_1,\ldots,f_c \in \m$ be a regular sequence. Let $Q=(f_1,\ldots,f_c)$, $R=S/Q$ and $f=f_1 \cdots f_c$.
Then $R$ is strongly F-regular if and only if 
there is an element $g \in S$ not in any minimal prime of $Q$ such that 
\begin{enumerate}
\item $R_g$ is strongly F-regular, and 
\item $gf^{p^e-1} \notin \m^{[p^e]}$ for some $e > 0$.
\end{enumerate}
 \end{corollary}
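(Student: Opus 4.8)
The plan is to specialize Glassbrenner's criterion (Theorem~\ref{Glassbrenner}) to $I = Q$ and then simplify the colon ideal $Q^{[p^e]} :_S Q$ using that $Q$ is generated by the regular sequence $f_1,\ldots,f_c$. First, the minimal primes of $R = S/Q$ are exactly the images in $R$ of the minimal primes of $Q$ in $S$, so the image of an element $g \in S$ in $R$ avoids every minimal prime of $R$ if and only if $g$ itself lies in no minimal prime of $Q$. Thus Theorem~\ref{Glassbrenner} says that $R$ is strongly F-regular if and only if there is $g \in S$ lying in no minimal prime of $Q$ such that $R_g$ is strongly F-regular and $g(Q^{[p^e]} :_S Q) \not\subseteq \m^{[p^e]}$ for some $e > 0$. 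It then remains to show that, for a fixed such $g$, the condition ``$g(Q^{[p^e]} :_S Q) \not\subseteq \m^{[p^e]}$ for some $e > 0$'' is equivalent to condition $(2)$, ``$gf^{p^e-1} \notin \m^{[p^e]}$ for some $e > 0$''; in fact the equivalence will hold for each individual $e$.

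The key input is the classical colon formula for a complete intersection: if $f_1,\ldots,f_c$ is a regular sequence in $S$ and $Q = (f_1,\ldots,f_c)$, then for every $N \geq 1$
\[
(f_1^N,\ldots,f_c^N) :_S Q \;=\; (f_1^N,\ldots,f_c^N) + (f^{N-1}),
\]
where $f = f_1\cdots f_c$. This is the identity underlying the complete-intersection form of Fedder's criterion \cite{Fedder}; one proves it by induction on $c$, the case $c=1$ being $(f_1^N):_S(f_1) = (f_1^{N-1})$, and the inductive step using that powers and (in the local ring $S$) permutations of a regular sequence are again regular sequences, so that $(f_1^N,\ldots,f_c^N):_S f_c$ and $(f_1^N,\ldots,f_c^N):_S(f_1,\ldots,f_{c-1})$ can be computed separately and intersected, the surviving cross-term being $(f^{N-1})$. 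Applying this with $N = p^e$, and recalling that $Q^{[p^e]} = (f_1^{p^e},\ldots,f_c^{p^e})$, gives $Q^{[p^e]} :_S Q = Q^{[p^e]} + (f^{p^e-1})$.

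Finally, since each $f_i \in \m$ we have $Q^{[p^e]} = (f_1^{p^e},\ldots,f_c^{p^e}) \subseteq \m^{[p^e]}$, hence $gQ^{[p^e]} \subseteq \m^{[p^e]}$ for every $g \in S$. Therefore
\[
g\bigl(Q^{[p^e]} :_S Q\bigr) \;=\; gQ^{[p^e]} + \bigl(gf^{p^e-1}\bigr)
\]
fails to be contained in $\m^{[p^e]}$ precisely when $gf^{p^e-1} \notin \m^{[p^e]}$. Feeding this equivalence (for each $e$) back into the reformulation of Theorem~\ref{Glassbrenner} from the first paragraph yields exactly the claim. There is no genuine obstacle in this argument: the corollary is essentially a bookkeeping restatement of Glassbrenner's theorem, and its only nontrivial ingredient, the colon formula for regular sequences, is standard; the sole points requiring minor care are the identification of $\Min(R)$ with the minimal primes of $Q$ and the observation that the quantifier over $e$ passes through because the equivalence above is valid term by term.
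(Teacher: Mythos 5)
Your proof is correct and follows essentially the same route as the paper: the paper's argument is exactly the reduction to Glassbrenner's criterion together with the colon formula $Q^{[p^e]}:_S Q = Q^{[p^e]} + (f^{p^e-1})$ from \cite[Proposition 2.1]{Fedder}. Your additional bookkeeping (identifying $\Min(R)$ with the minimal primes of $Q$ and noting $gQ^{[p^e]} \subseteq \m^{[p^e]}$) is exactly what the paper leaves implicit.
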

\begin{proof}
We have $(Q^{[p^e]}:_SQ) = (f^{p^e-1}) + Q^{[p^e]}$ \cite[Proposition 2.1]{Fedder}.
\end{proof}

 \subsection{Main results} 
 
 Let $K$ be a field, $I \subseteq S=K[x_1,\ldots,x_n]$, and $R=S/I$. We say that $R$ is \emph{geometrically F-rational} if $R_L \coloneqq R \otimes_K L$ is F-rational for every field extension $L$ of $K$. The following is our main result.

\begin{theorem} \label{ThmReducible}
Let $K$ be a field of characteristic $p>0$, and $S=K[x_1,\ldots,x_n]$. Let $f \in S$ be a square-free supported polynomial with irreducible factors $f_1,\ldots,f_t$. If we let $Q=(f_1,\ldots,f_t)$, then $R=S/Q$ is geometrically F-rational.
\end{theorem}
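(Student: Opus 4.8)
The plan is to reduce the statement to strong F-regularity of a localization, and then run an induction on the number of variables using Corollary~\ref{cor: CI}. First, since $S/Q$ is a complete intersection by Lemma~\ref{LemmaFactors}, it is Cohen--Macaulay, and F-rationality localizes well, so it suffices to prove that $(S/Q)_\mathfrak{m}$ is F-rational for every maximal ideal $\mathfrak{m}$; moreover, since a Cohen--Macaulay F-rational local ring that is also a complete intersection is Gorenstein, hence strongly F-regular once F-rational, I will instead aim to prove directly that $(S/Q)_\mathfrak{m}$ is strongly F-regular. Next, to get the geometric statement, I would use Lemma~\ref{LemmaFactors}(1) together with Theorem~\ref{thm irred}/Corollary~\ref{cor sqfree irred} to see that each $f_i$ remains irreducible and square-free supported after any base field extension, and that the $\vars(f_i)$ stay disjoint; so proving F-rationality over an arbitrary field $K$ already yields the geometric conclusion, and one may freely enlarge $K$ (e.g. to make it F-finite or perfect) when convenient.

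For the core induction, I would argue on $n = |\vars(f)|$. After translating coordinates — which preserves the square-free supported property by \cite[Lemma~3.2]{BMW} — I may assume the relevant maximal ideal is $\mathfrak{m} = (x_1,\dots,x_n)$ and that each $f_i \in \mathfrak{m}$. Pick a variable, say $x_n$, that divides some monomial in the support of some factor. Localizing at $W = K[x_n]\setminus\{0\}$ gives the ring $L[x_1,\dots,x_{n-1}]/(f_1,\dots,f_t)$ over $L = K(x_n)$, where the $f_i$ are still square-free supported; by the inductive hypothesis this localization is F-rational, hence strongly F-regular (Gorenstein complete intersection). Then I would apply Corollary~\ref{cor: CI}: with $f = f_1\cdots f_t$, I need an element $g \notin$ any minimal prime of $Q$ such that $(S/Q)_g$ is strongly F-regular and $g f^{p^e-1} \notin \mathfrak{m}^{[p^e]}$ for some $e$. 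The localization condition is handled by clearing denominators from the map produced by induction to get such a $g \in K[x_n]$, which one may take of the form $x_n^N u(x_n)$ with $u(0)\neq 0$; after localizing at $\mathfrak{m}$, $u$ becomes a unit, so effectively $g = x_n^N$.

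The heart of the argument — and the step I expect to be the main obstacle — is the monomial/colon-ideal computation showing $x_n^N f^{p^e-1} \notin \mathfrak{m}^{[p^e]}$ for suitably large $e = e'$. Since $f$ is square-free supported, $f^{q-1}$ (with $q = p^{e'}$) has in its support the monomial $w^{q-1}$ for any $w \in \supp(f)$; the disjointness of the $\vars(f_i)$ (Lemma~\ref{LemmaFactors}(2)) means $\supp(f) = \supp(f_1)\cdots\supp(f_t)$ behaves multiplicatively across the variable blocks. The goal is to produce, by multiplying $f^{q-1}$ by an explicit monomial supported on the variables \emph{not} divisible into some chosen minimal-degree monomial and on $x_n$, a term congruent to $(x_1\cdots x_n)^{q-1}$ modulo $\mathfrak{m}^{[q]}$, so that the resulting element is a unit multiple of the socle generator and escapes $\mathfrak{m}^{[q]}$; absorbing the factor $x_n^N$ requires choosing $q > N$ and adjusting the auxiliary monomial by $x_n^{-N}$. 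One must check that no cancellation occurs among the many terms of $f^{q-1}$ that land in the same monomial — this uses square-freeness (each contributing monomial from $\supp(f)$ is square-free, so the multinomial coefficient is a product of factorials of $1$'s, i.e.\ nonzero in characteristic $p$ for the relevant terms) — and that the chosen monomial indeed exists, i.e.\ that some factor has a monomial not divisible by $x_n$, which holds because $f_i \in \mathfrak{m}$ is irreducible and not a scalar multiple of $x_n$. Once this Fedder-type non-containment is established, Corollary~\ref{cor: CI} closes the induction, and the base case $n \le 1$ (where $Q$ is generated by a linear form or the ring is regular) is immediate.
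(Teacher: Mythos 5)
Your overall strategy is the same as the paper's: reduce to strong F-regularity at rational maximal ideals after enlarging the field, induct on the number of variables by inverting one variable, and close the induction with the complete-intersection Fedder/Glassbrenner criterion (Corollary~\ref{cor: CI}) via a socle-monomial computation that uses square-free support and the disjointness of the $\vars(f_i)$. However, there is a genuine gap in how you choose the variable to invert. You say ``pick a variable, say $x_n$, that divides some monomial in the support of some factor,'' but this can fail: if the factor containing $x_n$ is a scalar multiple of $x_n$ itself (e.g.\ $f=x_1(x_2x_3+x_4)$ with the choice $x_n=x_1$, or $f=x_1x_2$, where \emph{every} admissible choice is of this form), then $x_n$ lies in a minimal prime of $Q$, so the hypothesis of Corollary~\ref{cor: CI} that $g$ avoid the minimal primes --- which your proposal never verifies --- is violated; moreover $QT=T$ after inverting $K[x_n]\setminus\{0\}$, so the inductive input degenerates, and condition (2) fails as well since every monomial of $x_n^N f^{p^e-1}$ is divisible by $x_n^{p^e}$. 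Your remark that the relevant factor is ``not a scalar multiple of $x_n$'' is exactly what needs to be arranged, not assumed. The paper handles this with a dichotomy you are missing: if every $f_i$ has a degree-one monomial in its support, then $R_\m$ is already regular and there is nothing to prove; otherwise some $f_i$ has $\supp(f_i)\subseteq\m^2$, and one chooses $x_n\in\vars(f_i)$ for \emph{that} factor. This choice simultaneously guarantees that $f_i$ stays a non-unit (and irreducible) in $K(x_n)[x_1,\ldots,x_{n-1}]$, that $x_n$ is not in any minimal prime of $Q$, and that $\supp(f_i)$ contains a monomial not divisible by $x_n$ for the socle computation.

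Two smaller points. First, reducing to $\m=(x_1,\ldots,x_n)$ by translation requires the base field to be algebraically closed, not merely perfect or F-finite; the paper first base-changes to $\overline{L}$, proves strong F-regularity there, and then descends F-rationality along the faithfully flat map $R_L\to R_{\overline{L}}$ (citing \cite[Proposition A.5]{DattaMurayama}), so you should make this reduction explicit rather than ``enlarge $K$ when convenient.'' Second, your justification of condition (1) of Corollary~\ref{cor: CI} by ``clearing denominators from the map produced by induction'' is unnecessary (it belongs to a direct splitting argument, not the Glassbrenner route): it suffices to observe that every nonzero $h(x_n)=x_n^k u(x_n)$ with $u(0)\neq 0$ is invertible in $(R_\m)_{x_n}$, so $(R_\m)_{x_n}$ is a localization of the strongly F-regular ring $K(x_n)[x_1,\ldots,x_{n-1}]/QT$ furnished by induction; with this, $g=x_n$ works and no exponent $N$ or condition $p^e>N$ is needed. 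Your use of the Gorenstein upgrade (F-rational complete intersection, F-finite, hence strongly F-regular) and the no-cancellation argument via a minimal-degree monomial $u\in\supp(f_i)$ not divisible by $x_n$ are both fine and match the paper.
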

\begin{proof}
Let $L$ be a field extension of $K$. The image of $f_i$ in $S_L \coloneqq S \otimes_K \overline{L}$ is irreducible for every $i$ by Corollary~\ref{cor sqfree irred}, and the image of $f$ is still square-free supported. We note that it suffices to show that $R_{\overline{L}}$ is strongly F-regular. In fact, if that was the case, then $R_{\overline{L}}$ would in particular be F-rational, and since the map $R_L \to R_{\overline{L}}$ is faithfully flat, we would conclude that $R_L$ is F-rational \cite[Proposition A.5]{DattaMurayama}. Thus, replacing $K$ with $\overline{L}$, we may directly assume that $K$ is algebraically closed and show that $R$ is strongly F-regular. Strong F-regularity can be tested locally, so let $\m$ be a maximal ideal of $S$ containing $Q$. We want to show that $R_\m$ is strongly F-regular. Since the base field is now algebraically closed, and a change of coordinates $x_i \mapsto x_i+a_i$ with $a_i \in K$ does not affect the fact that $f$ is square-free supported \cite[Lemma 3.2]{BMW}, we may assume that $\m=(x_1,\ldots,x_n)$. By Lemma \ref{LemmaFactors}, the irreducible factors $f_i$ of $f$ are square-free supported and satisfy $\vars(f_i) \cap \vars(f_j) = \varnothing$ for all $i \ne j$.

We proceed by induction on $n \geq 1$. The base case $n=1$ is immediate because $f$ is forced to be equal to $x_1$. 
Now assume $n>1$. 
If for every $i \in \{1,\ldots,t\}$ the support of $f_i$ contains a monomial of degree one, say a variable $x_{k_i}\in\vars(f_i)$, then we can rewrite $f_i = \lambda x_{k_i} + g_i$ where $\lambda$ is a unit in $R_\m$ and all the monomials in the support of $g_i$ are not divisible by $x_{k_i}$. Thus
$$
R_\m\cong \left( \frac{K[x_1,\ldots,x_n]}{(x_1,\cdots, x_t)}\right)_\m,
$$
where $Q_\m$ maps to $(x_1,\ldots, x_t)_\m$, and the statement follows.

Now suppose that $\supp(f_i) \subseteq \m^2$ for some $i$; we may assume $i=1$. Suppose without loss of generality that $x_n$ divides a monomial in the support of $f_1$. We claim that the Glassbrenner criterion of Corollary~\ref{cor: CI} applies to $g = x_n$. We start by showing that $(R_\m)_{x_n}$ is strongly F-regular. 
Let $T=K(x_n)[x_1,\ldots,x_{n-1}]$, and note that it is an F-finite ring. We also note that $f$ is a square-free supported polynomial ideal in $T$ and that the image of $f_j$ is irreducible in $T$ for every 
$j$, since $T$ is a localization of $S$. For $j=1$ we also have to observe that $f_1$ is not a unit in $T$ because $f_1\in\m^2$ by assumption. We have that $T/QT$ is a strongly F-regular ring by induction.
Since $(R_\m)_{x_n}$ is a localization of $T/QT$, it is strongly F-regular. 

We now verify the second condition. 
Since $f_1$ is irreducible, the set $\mathcal{A}=\{u \in \supp(f_1) \mid x_n$ does not divide $u\}$ is not empty. Choose $u \in \mathcal{A}$ of minimal degree. Then there is a monomial $v \in \supp(f_1^{p-1})$ which is not divisible by $x_n$ and such that $v \notin \m^{[p]}$. Since $\vars(f_i) \cap \vars(f_j) = \varnothing$ for $i \ne j$, and each $f_j$ is square-free supported, there exists a monomial $w \in \supp(f^{p-1})$ which is not divisible by $x_n$ and such that $w \notin \m^{[p]}$. In particular, we have that $x_nf^{p^e-1} \notin \m^{[p^e]}$. 
Since $f_1,\ldots,f_t$ forms a regular sequence in $S_\m$ by Lemma~\ref{lemma1}, $R_\m$ is strongly F-regular by Corollary~\ref{cor: CI}.
 \end{proof}
 
 We are now able to positively answer Question \ref{Quest} without assuming that the base field $K$ is algebraically closed and, even more, obtaining geometric F-rationality.
 
 \begin{corollary} \label{CorPoly}
  Let $K$ be a field of characteristic $p>0$, and $S=K[x_1,\ldots,x_n]$. If $f \in S$ is an irreducible square-free supported polynomial, then $S/(f)$ is geometrically F-rational.
 \end{corollary}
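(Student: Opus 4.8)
The plan is to obtain this immediately from Theorem~\ref{ThmReducible} by specializing to the case of a single irreducible factor. If $f$ is irreducible, then, up to a unit of $K$, its list of irreducible factors is $f$ itself, so in the notation of Theorem~\ref{ThmReducible} we have $t = 1$, $f_1 = f$, and $Q = (f_1) = (f)$. That theorem then says precisely that $R = S/(f)$ is geometrically F-rational, which is the assertion; in particular $S/(f)$ is F-rational, and it remains so after an arbitrary base field extension $K \subseteq L$ since $R \otimes_K L$ is among the rings covered.

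The point worth stressing is how this compares with Question~\ref{Quest}: that question asks only for F-rationality of the hypersurface $Z \subseteq \mathbb{A}^n_K$ cut out by an irreducible square-free supported polynomial under the hypothesis that $K$ is algebraically closed, whereas the corollary dispenses with that hypothesis and additionally yields geometric F-rationality. When $K$ is algebraically closed this recovers a positive answer to the Bath--Musta\c{t}\u{a}--Walther question, and Corollary~\ref{cor sqfree irred} ensures that $Z$ is actually geometrically irreducible, so the scheme in question is the one expected.

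I expect no real obstacle here: all of the substance is already contained in Theorem~\ref{ThmReducible} (whose proof handles passage to $\overline{L}$, the reduction to a maximal ideal, the coordinate change making the factors' variable sets disjoint, and the application of the Glassbrenner criterion via Corollary~\ref{cor: CI}) together with the geometric-irreducibility results of Section~2. The corollary is simply the $t = 1$ instance, recorded separately because it is the statement that directly answers Question~\ref{Quest}.
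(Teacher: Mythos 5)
Your proposal is correct and matches the paper's intent exactly: the corollary is stated without proof precisely because it is the $t=1$ case of Theorem~\ref{ThmReducible}, with $Q=(f)$, and your reading of how this answers Question~\ref{Quest} is accurate.
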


Corollary \ref{CorPoly} implies that the variety defined by an irreducible square-free supported polynomial in characteristic zero has rational singularities. This extends to any field of characteristic zero previous work of Bath, Musta\c{t}\u{a}, and Walther \cite[Theorem 1.1]{BMW}.

\begin{corollary} \label{Char0} Let $K$ be a field of characteristic zero, and $Z \subseteq \mathbb{A}^n_K$ be a hypersurface defined by an irreducible square-free supported polynomial $f \in K[x_1,\ldots,x_n]$. Then $Z$ has rational singularities.
\end{corollary}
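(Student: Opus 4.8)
The plan is to deduce Corollary~\ref{Char0} from Corollary~\ref{CorPoly} via the standard reduction-to-positive-characteristic machinery, so the only real work is setting up the spreading-out correctly and invoking the Smith--Hara comparison between F-rational and rational singularities. First I would reduce to a finitely generated base: since $Z$ is cut out by a single polynomial $f \in K[x_1,\ldots,x_n]$ with finitely many coefficients, let $A \subseteq K$ be the $\mathbb{Z}$-subalgebra generated by these coefficients (and the inverses of any that we need to be units), so that $A$ is a finitely generated $\mathbb{Z}$-algebra, $f \in A[x_1,\ldots,x_n]$, and $Z$ is obtained from $Z_A := \Spec A[x_1,\ldots,x_n]/(f)$ by base change along $A \hookrightarrow K$. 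Because $f$ is square-free supported and irreducible over $K$, Corollary~\ref{cor sqfree irred} guarantees $f$ stays irreducible over any field extension of $K$; I would use this to arrange (after inverting finitely many elements of $A$) that the fibers $Z_A \times_A \kappa(\mathfrak{p})$ are irreducible for all $\mathfrak{p} \in \Spec A$, and in particular $f$ remains square-free supported in every such fiber since the support only depends on which monomials have nonzero (hence, after inverting those coefficients, unit) coefficient.

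Next I would invoke the characterization of rational singularities via reduction mod $p$: $Z$ has rational singularities over $K$ (a field of characteristic zero) if and only if, for a dense set of closed points $\mathfrak{p} \in \Spec A$ (equivalently, for all but finitely many primes $p$ and a dense set of $\mathfrak{p}$ lying over $p$), the fiber $Z_{\kappa(\mathfrak{p})}$ has F-rational singularities; this is precisely the Smith--Hara theorem \cite{Smith,Hara} (see also \cite{MehtaSrinivas}), together with the fact that rational singularities of $Z_K$ are detected after base change to $\overline{K}$ and then spread out. More carefully: it suffices to show $Z_{\overline{K}}$ has rational singularities, and by generic flatness plus the openness of the rational-singularity locus one reduces to showing that $Z_{\kappa(\mathfrak{p})}$ is F-rational for a dense set of closed points $\mathfrak{p}$. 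For every closed point $\mathfrak{p}$, the residue field $\kappa(\mathfrak{p})$ is a finite field, hence of some characteristic $p > 0$, and $f$ remains an irreducible square-free supported polynomial in $\kappa(\mathfrak{p})[x_1,\ldots,x_n]$ by the arrangements made above. Corollary~\ref{CorPoly} then applies directly and gives that $Z_{\kappa(\mathfrak{p})}$ is F-rational, in fact geometrically F-rational, which is more than enough.

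Putting these together: for every closed point $\mathfrak{p} \in \Spec A$ the fiber $Z_{\kappa(\mathfrak{p})}$ is F-rational, so in particular this holds on a dense set, and by the Smith--Hara criterion $Z$ has rational singularities. I expect the main obstacle to be purely bookkeeping: correctly formulating the ``rational singularities iff dense F-rational fibers'' statement so that it applies to the affine scheme $Z$ (which is not proper, and whose singular locus is where one needs openness and compatibility with base change), and making sure the finitely many coefficients-of-$f$ inversions and the irreducibility-spreading-out are all done over the same finitely generated $\mathbb{Z}$-algebra $A$. None of this is difficult, since the hard analytic input — that the positive-characteristic fibers are F-rational — is exactly Corollary~\ref{CorPoly}; the remaining steps are the well-documented reduction-mod-$p$ formalism, and one can cite \cite{Smith,Hara,MehtaSrinivas} for the precise statements rather than reproving them.
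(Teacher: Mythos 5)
Your proposal is correct and follows essentially the same route as the paper: spread $f$ out over a finitely generated $\ZZ$-algebra $A$, use the geometric irreducibility from Corollary~\ref{cor sqfree irred} (plus the standard spreading-out result, cited in the paper as \cite[Theorem 2.3.6]{HHChar0}) to get irreducible square-free supported fibers over a dense set of closed points, apply Corollary~\ref{CorPoly} to obtain F-rationality of these fibers, and conclude via \cite[Theorem 4.3]{Smith}. The paper's proof is just a more compact version of the same bookkeeping you describe.
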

\begin{proof}
Let $A$ be a finitely generated $\ZZ$-algebra which contains all coefficients of the polynomial $f$. We can then view $f$ as a polynomial in $T=A[x_1,\ldots,x_n]$, then $T/fT \otimes_A K$ coincides with the original hypersurface.
By Corollary~\ref{cor sqfree irred} $T/fT$ is geometrically irreducible, therefore there exists a dense open subset $U$ of ${\rm Max}\Spec(A)$ such that the image of $f$ in $T_{\kappa(\p)}= \kappa(\p)[x_1,\ldots,x_n]$ is irreducible for every $\p \in U$, with $\kappa(\p) = A/\p$ \cite[Theorem 2.3.6]{HHChar0}. Note that the image of $f$ in $T_{\kappa(\p)}$ is square-free supported. By Corollary \ref{CorPoly} we have that $T_{\kappa(\p)}/(f)$ is F-rational for every $\p \in U$, and thus $T_K/(f)$ has rational singularities by \cite[Theorem~4.3]{Smith}.
\end{proof}

We now present a consequence of Corollary \ref{CorPoly} for polynomials that are not necessarily square-free supported, along the lines of \cite[Theorem 1.3]{BMW}.

\begin{corollary}\label{CorSqfreeModification}
Let $K$ be a field of characteristic $p>0$, and $g, h\in S=K[x_1,\ldots,x_n]$ be square-free supported homogeneous polynomials satisfying $\deg(h)=1 + \deg(g)$. Let $\ell=a_1 x_1 +\cdots+a_n x_n+1$ for some $a_1,\ldots, a_n\in K$. If $g$ is irreducible and does not divide $h$, then $f=g\ell+h$ is geometrically irreducible and $S/(f)$ is geometrically F-rational.
\end{corollary}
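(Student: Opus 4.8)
The key idea is to reduce the statement to Corollary~\ref{CorPoly} by exhibiting $S/(f)$ as (a localization of) a quotient by an irreducible square-free supported polynomial in a larger polynomial ring. First I would introduce an auxiliary variable $x_0$ and consider the homogenization: set $F = g \cdot (a_1 x_1 + \cdots + a_n x_n + x_0) + h \cdot x_0 \in K[x_0, x_1, \ldots, x_n]$, where I must use that $\deg(h) = 1 + \deg(g)$ so that the two summands have the same degree, hence $F$ is homogeneous of degree $\deg(g)+1$; note $f = F|_{x_0 = 1}$, so $S/(f)$ is the dehomogenization $\bigl(K[x_0,\ldots,x_n]/(F)\bigr)_{x_0}/(x_0-1)$, and F-rationality (and irreducibility) will transfer along the faithfully flat localization $x_0 \mapsto$ invertible and the regular-sequence quotient $x_0 - 1$.

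The main point is then to show that $F$ is irreducible and square-free supported as an element of $K[x_0, x_1, \ldots, x_n]$. Square-free supportedness of $F$ requires a little care: the variables $x_1, \ldots, x_n$ occur in $g\ell$ and in $h$, but since $g$ and $h$ are square-free supported and $\deg \ell = 1$, the product $g\ell$ could a priori have a square if some variable of $g$ also appears in $\ell$. Here I would instead argue more cleverly — rather than homogenizing with $\ell$ directly, I would perform the linear change of coordinates that sends $x_0 \mapsto \ell = a_1 x_1 + \cdots + a_n x_n + x_0$ (an invertible affine substitution fixing $x_1, \ldots, x_n$), under which $F$ becomes $g x_0 + h'$ for a suitable $h'$; one checks, using Lemma~\ref{LemmaFactors}-style reasoning on the leading terms under a lex order with $x_0$ largest, that this representative is square-free supported precisely because $g$ is square-free supported, $\vars(g) \not\ni x_0$, and $h$ is square-free supported. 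The irreducibility of $gx_0 + h'$ follows from Lemma~\ref{lemma1}: $gx_0 + h'$ is irreducible in $K[x_1,\ldots,x_n][x_0]$ iff $\gcd(g, h') = 1$, which holds because $g$ is irreducible and $g \nmid h$ (and the change of coordinates is invertible, so it preserves divisibility and the gcd).

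With $F$ (equivalently its coordinate-changed form) shown to be irreducible and square-free supported, Corollary~\ref{CorPoly} gives that $K[x_0, x_1, \ldots, x_n]/(F)$ is geometrically F-rational. Then I would descend: geometric F-rationality is stable under base change $L/K$ by definition, and for a fixed extension $L$, F-rationality of $S_L/(f)$ follows from that of $L[x_0,\ldots,x_n]/(F)$ because F-rationality passes to localizations and to quotients by a nonzerodivisor that remains a nonzerodivisor modulo $F$ — and $x_0 - 1$ is such an element since $F$, being irreducible of positive degree in $x_1,\ldots,x_n$ (as $g \ne 0$), is not divisible by $x_0 - 1$. Geometric irreducibility of $f$ follows the same way from geometric irreducibility of $F$, which is Corollary~\ref{cor sqfree irred}. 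The main obstacle I anticipate is the bookkeeping in the square-free-support verification under the change of coordinates: one must confirm that no cancellation or squaring is introduced, which amounts to checking that $\vars(g)$, $\{x_0\}$, and the variables introduced by $h$ interact correctly — this is exactly the type of leading-monomial argument used in the proof of Lemma~\ref{LemmaFactors}, so it should go through, but it requires writing out the support of $gx_0$ versus that of $h'$ explicitly.
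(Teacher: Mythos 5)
Your overall plan---homogenize $f$, change coordinates so that the homogenization is visibly square-free supported, get irreducibility from Lemma~\ref{lemma1}/Theorem~\ref{thm irred}, apply Corollary~\ref{CorPoly}, and transfer back to $S/(f)$---is exactly the paper's route, but two of your steps are wrong as written. First, your homogenization is off: with $F = g(a_1x_1+\cdots+a_nx_n+x_0) + h\,x_0$ the two summands have degrees $\deg(g)+1$ and $\deg(h)+1=\deg(g)+2$, so $F$ is \emph{not} homogeneous and is not the homogenization of $f$; moreover, after your substitution sending the new variable to $a_1x_1+\cdots+a_nx_n+x_0$, this $F$ becomes $(g+h)y - h(a_1x_1+\cdots+a_nx_n)$, which is in general not square-free supported (variables of $h$ reappear in the linear form), so Corollary~\ref{CorPoly} cannot be applied to it. The correct object, and the one the paper uses, is $\widetilde{f}=g\widetilde{\ell}+h$ with $\widetilde{\ell}=a_1x_1+\cdots+a_nx_n+z$ (no extra factor of $z$ on $h$); in the coordinates $y_i=x_i$, $y=\widetilde{\ell}$ it reads $gy+h$, which is square-free supported because $y\notin\vars(g)\cup\vars(h)$, and is irreducible over every extension of $K$ since $\gcd_S(g,h)=1$ (here $g$ irreducible, $g\nmid h$), by Lemma~\ref{lemma1}, Lemma~\ref{lemma 2} and Theorem~\ref{thm irred}.

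Second, your transfer back relies on the claim that F-rationality ``passes to quotients by a nonzerodivisor that remains a nonzerodivisor modulo $F$.'' That principle is false: F-rationality deforms (from $R/xR$ to $R$), not the other way---e.g.\ $K[x,y,z]$ is F-rational while $K[x,y,z]/(x^3+y^3+z^3)$ is not for suitable $p$, although the cubic is a nonzerodivisor. So quotienting by $x_0-1$ needs a genuinely different justification. The graded structure rescues the step: since $z$ has degree one in $\widetilde{R}_L=\widetilde{S}_L/(\widetilde{f})$, one has $(\widetilde{R}_L)_z\cong \big((\widetilde{R}_L)_z\big)_0[z,z^{-1}]$ and $\big((\widetilde{R}_L)_z\big)_0\cong \widetilde{R}_L/(z-1)\cong S_L/(f)$; the localization $(\widetilde{R}_L)_z$ is F-rational, and F-rationality then \emph{descends} along the faithfully flat Laurent extension (the Datta--Murayama descent result the paper cites). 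The paper's proof avoids $z-1$ altogether via the automorphism $x_i\mapsto x_iz^{-1}$ of $\widetilde{S}_L[z^{-1}]$, which carries $(f)$ to $(\widetilde{f})$ and identifies $(\widetilde{R}_L)_z$ with $R_L[z,z^{-1}]$ directly; either way the step down from the homogenized ring must go through faithfully flat descent, not a hypersurface section. The same caveat applies to geometric irreducibility: deduce it from the homogenization--dehomogenization correspondence of factorizations, or from $R_L\subseteq R_L[z,z^{-1}]\cong (\widetilde{R}_L)_z$ being a subring of a domain, not from cutting an irreducible variety by $x_0-1$, which in general does not preserve irreducibility.
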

\begin{proof}
Let $\widetilde{\ell} = a_1 x_1 +\cdots+a_n x_n +z$ and $\widetilde{f}= g\widetilde{\ell} +h\in \widetilde{S} \coloneqq
K[x_1,\ldots, x_n,z]$. We note that $\widetilde{\ell}$ is the homogenization of $\ell$ and $\widetilde{f}$ is the homogenization of $f$. We consider the change of coordinates $y_i = x_i$ for $1\leq i \leq n$ and $y = \tilde{\ell}$. Since $g$ and $h$ are square-free supported polynomials, $\widetilde{f} = gy+h$ is a square-free supported homogeneous polynomial with respect to the coordinates $y_1,\ldots, y_n,y$. Now let $L$ be a field extension of $K$. Since $\gcd_S(g,h)=1$, it follows from Lemma \ref{lemma1} and Theorem \ref{thm irred} that $\widetilde{f}$ is irreducible in $S_L \coloneqq S \otimes_K L$, and that $\widetilde{R}=\widetilde{S}/(\widetilde{f}) \otimes_K L$ is F-rational by Theorem \ref{CorPoly}. Let $T=\widetilde{S}[z^{-1}] \otimes_K L$, and $\phi\colon T\to T$ be the $L$-algebra automorphism given by $x_i\mapsto x_iz^{-1}$ and $z\mapsto z$. We note that $\phi(f)=\widetilde{f}z^{-\deg(\widetilde{f})}$, so that $\phi((f)T)=(\widetilde{f})T$. Then, $\phi$ induces an isomorphism $T/(f)T \to T/(\widetilde{f})T$, and it follows that $T/(f)T$ is F-rational. Since the map $R_L \to R_L[z,z^{-1}] \cong T/(f)T$ is faithfully flat, we obtain that $R_L$ is also F-rational \cite[Proposition A.5]{DattaMurayama}.
\end{proof}

Finally, we prove a characteristic zero version of Corollary \ref{CorSqfreeModificationRational}. This recovers the aforementioned result by Bath, Musta\c{t}\u{a}, and Walther \cite[Theorem 1.3]{BMW}, but does not require that the base field is algebraically closed.
\begin{corollary}\label{CorSqfreeModificationRational}
Let $K$ be a field of characteristic zero, and $g, h\in S=K[x_1,\ldots,x_n]$ be square-free supported homogeneous polynomials satisfying $\deg(h)=1 + \deg(g)$.
Let $\ell=a_1 x_1 +\cdots+a_n x_n+1$ for some $a_1,\ldots, a_n\in K$. Assume that $g$ is irreducible and does not
divide $h$. If $Z \subseteq \mathbb{A}^n_K$ is the hypersurface defined by $f=g\ell +h$, then $Z$ has rational singularities.
\end{corollary}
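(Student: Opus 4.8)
The plan is to run the reduction-modulo-$p$ argument from the proof of Corollary~\ref{Char0}, now feeding in Corollary~\ref{CorSqfreeModification} in place of Corollary~\ref{CorPoly}. First I would pick a finitely generated $\ZZ$-subalgebra $A\subseteq K$ containing the coefficients $a_1,\dots,a_n$ of $\ell$ together with all coefficients of $g$ and of $h$; note that $A$ is automatically a domain. Then $g,h$ and $f=g\ell+h$ all lie in $T\coloneqq A[x_1,\dots,x_n]$, and $T/fT\otimes_A K$ is the coordinate ring of $Z$. For a maximal ideal $\p\subseteq A$ with $\kappa(\p)=A/\p$, reduction modulo $\p$ sends $g,h$ to square-free supported homogeneous polynomials of the same degrees and sends $\ell$ to $\bar a_1x_1+\cdots+\bar a_nx_n+1$, so the ``combinatorial'' hypotheses of Corollary~\ref{CorSqfreeModification} ($g,h$ square-free supported homogeneous, $\deg h=1+\deg g$, and the prescribed shape of $\ell$) are preserved for every $\p$.

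The substantive point is to spread out the two remaining hypotheses of Corollary~\ref{CorSqfreeModification}, namely that $g$ is irreducible and that $g$ does not divide $h$, to a dense open $U\subseteq {\rm Max}\Spec(A)$. Since $g$ is irreducible over $K$, it is irreducible over the subfield $\operatorname{Frac}(A)$; being square-free supported, Corollary~\ref{cor sqfree irred} then shows it is \emph{geometrically} irreducible over $\operatorname{Frac}(A)$, so by \cite[Theorem~2.3.6]{HHChar0} there is a dense open $U_1$ over which the image of $g$ in $\kappa(\p)[x_1,\dots,x_n]$ stays irreducible. For the non-divisibility, I would use that since $\deg h=1+\deg g$ one has, over \emph{any} field, that $g\mid h$ if and only if $h=g\cdot(b_1x_1+\cdots+b_nx_n)$ for suitable scalars $b_i$; thus ``$g\nmid h$'' amounts to the inconsistency over the given field of a fixed linear system whose coefficient matrix and augmented matrix have entries in $A$. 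Since the ranks of these two matrices can only drop on a proper closed subset of ${\rm Max}\Spec(A)$ after reduction, there is a dense open $U_2$ over which $g\nmid h$ in $\kappa(\p)[x_1,\dots,x_n]$. Set $U=U_1\cap U_2$.

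Now for any $\p\in U$ the residue field $\kappa(\p)$ has positive characteristic (it is a finite field), so Corollary~\ref{CorSqfreeModification} applies over $\kappa(\p)$ and gives that $(T/fT)\otimes_A\kappa(\p)=\kappa(\p)[x_1,\dots,x_n]/(f)$ is F-rational. Hence $T/fT$ is of F-rational type, and therefore $T/fT\otimes_A K$, the coordinate ring of $Z$, has rational singularities by \cite[Theorem~4.3]{Smith}. The only delicate step is the spreading-out in the middle paragraph: irreducibility of $g$ passes to a dense set of reductions only because, thanks to square-free supportedness and Corollary~\ref{cor sqfree irred}, it is actually geometric irreducibility, and the persistence of $g\nmid h$ has to be reduced to an open (rank) condition rather than assumed; everything else in the argument is a direct transcription of the proof of Corollary~\ref{Char0}.
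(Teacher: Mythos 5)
Your proof is correct, and it follows the paper's overall strategy (choose a finitely generated $\ZZ$-subalgebra $A\subseteq K$ containing the coefficients, spread out to a dense open subset of ${\rm Max}\Spec(A)$ via \cite[Theorem~2.3.6]{HHChar0}, apply Corollary~\ref{CorSqfreeModification} over the residue fields $\kappa(\p)$, and conclude with \cite[Theorem~4.3]{Smith}), but the spreading-out step is handled differently. The paper spreads out a single condition, namely the irreducibility of $f=g\ell+h$ itself: it uses the geometric irreducibility of $f$ (the characteristic-free part of Corollary~\ref{CorSqfreeModification}) to find a dense open set of primes where $f$ remains irreducible, and then applies Corollary~\ref{CorSqfreeModification} over $\kappa(\p)$; strictly speaking this relies on the observation that irreducibility of $f$ modulo $\p$ forces $\gcd(g,h)=1$ modulo $\p$, which is all that the \emph{proof} of Corollary~\ref{CorSqfreeModification} needs, since $g$ itself need not stay irreducible after reduction. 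You instead spread out the stated hypotheses directly: irreducibility of $g$ persists on a dense open because $g$, being square-free supported and irreducible, is geometrically irreducible by Corollary~\ref{cor sqfree irred}; and $g\nmid h$ persists because, by homogeneity and $\deg h=\deg g+1$, divisibility is equivalent to solvability of a fixed linear system with entries in $A$, so non-divisibility is a rank inequality preserved on the dense open where appropriate minors of the augmented matrix remain nonzero (this also automatically excludes the primes where $h$ reduces to zero). Both routes are valid: yours verifies the hypotheses of Corollary~\ref{CorSqfreeModification} literally over each $\kappa(\p)$ and is therefore a bit more self-contained, while the paper's is shorter because only one irreducibility condition needs to be spread out.
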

\begin{proof}
There is a finitely generated $\ZZ$-algebra $A$ such that $f$ is defined over $T=A[x_1,\ldots,x_n]$, and its image in $T_K = T \otimes_A K$ coincides with the original polynomial. By Corollary \ref{CorSqfreeModification} we have that $f$ is geometrically irreducible. Thus, there exists a dense open subset $U$ of ${\rm Max}\Spec(A)$ such that the image of $f$ in $T_{\kappa(\p)}= \kappa(\p)[x_1,\ldots,x_n]$ is irreducible for every $\p \in U$ \cite[Theorem 2.3.6]{HHChar0}. It follows from Corollary \ref{CorSqfreeModification} that $T_{\kappa(\p)}/(f)$ is F-rational for every $\p \in U$, and we conclude that $Z$ has rational singularities by \cite[Theorem~4.3]{Smith}.
\end{proof}

\section{Formulas for the defect of the F-pure threshold} \label{SecDFPT}

We start by recalling the relevant definitions.

\begin{definition}[\cite{TW2004}]\label{DefFptGlobalIdeal}
Let $R$ be an F-finite F-split ring and $J\subseteq R$ be an ideal.
\begin{itemize}
\item For $\lambda \in \R_{\geq 0}$ we say that the pair $(R,J^\lambda)$ is F-pure if, for all $e\gg 0$, there exists $f\in J^{\lfloor \lambda(p^e-1) \rfloor }$ and a map 
$\phi \in \Hom_R(F^e_*(R),R)$ such that
$\phi(F^e_*(f))=1$.
\item If $(R, \m)$ is local, the F-pure threshold of $R$ is defined as
$$
\fpt(R)=\sup\{ \lambda\in \mathbb{R}_{\geq 0}\; |\; (R,\m^\lambda) \hbox{ is F-pure}\}.
$$
\end{itemize}
\end{definition}

\begin{definition}[{\cite{DSNBS}}]\label{DefDFPT}
If $(R,\m)$ is an F-finite F-split local ring, the defect of the F-pure threshold is defined as $\dfpt(R)=\dim(R)-\fpt(R)$. 
\end{definition}

\begin{definition}
Let $(R,\m)$ be a local ring of dimension $d$. The \emph{Hilbert--Samuel multiplicity} of $R$ is
\[
\e(R) = \lim\limits_{n \to \infty} \frac{d!\ell_R(R/\m^n)}{n^d}.
\]
\end{definition}

We recall that, if $X$ is a topological space, a function $\varphi\colon X \to \R$ is \emph{upper semi-continuous} if, for every $\lambda \in \R$, the set $\{x \in X \mid \varphi(x) < \lambda\}$ is open. 

\begin{remark}
Since $\Spec(R)$ is quasi-compact, any upper semi-continuous function on $\Spec(R)$ has a maximum. For example, if $R$ is an F-finite F-split locally equidimensional ring then then we have upper semi-continuous functions 
$\Spec(R) \ni \p \mapsto \dfpt(R_\p)$ \cite[Theorem 4.3]{DSNBS}
and  $\Spec(R) \ni \p \mapsto \e(R_\p)$ (e.g., see  \cite[3.1.6]{Smirnov} and note that F-finite rings are excellent \cite{Kunz}).
Thus, we can define 
\[
\dfpt(R) \coloneqq \max\{\dfpt(R_\p) \mid \p \in \Spec(R)\}
\text{ and } \e(R) \coloneqq \max\{\e(R_\p) \mid \p \in \Spec(R)\}.
\]
\end{remark}

 \begin{theorem} \label{ThmDfptOne}
 Let $K$ be an algebraically closed field of characteristic $p>0$, let $S=K[x_1,\ldots,x_n]$, and $f \in S$ be a square-free supported polynomial. Let $R=S/(f)$. Then $\dfpt(R_\m)=
\e(R_\m)-1$ for every maximal ideal $\m$ of $R$. As a consequence, $\dfpt(R) = \e(R)-1$.
\end{theorem}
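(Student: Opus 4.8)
The plan is to fix a maximal ideal $\m$ of $R$ (identified with the maximal ideal of $S$ containing $f$ that it corresponds to), compute the three invariants $\dim(R_\m)$, $\e(R_\m)$, $\fpt(R_\m)$ separately, and check directly that $\dim(R_\m)-\fpt(R_\m)=\e(R_\m)-1$; the displayed global identity will then follow from upper semi-continuity. Since $K$ is algebraically closed, $\m=(x_1-a_1,\dots,x_n-a_n)$, and the translation $x_i\mapsto x_i+a_i$ preserves square-free supportedness by \cite[Lemma~3.2]{BMW}, so I may assume $\m=(x_1,\dots,x_n)$ and $f\in\m$. Then $R_\m=S_\m/(f)$ is a nonzero hypersurface quotient of a regular local ring, whence $\dim(R_\m)=n-1$. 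Let $v=\ord_\m(f)\ge 1$ and let $f^\ast$ be the nonzero lowest-degree homogeneous part of $f$, so $\deg f^\ast=v$. Since $\gr_\m(S_\m)\cong K[x_1,\dots,x_n]$ with $f^\ast$ the initial form of $f$, we get $\gr_\m(R_\m)\cong K[x_1,\dots,x_n]/(f^\ast)$, hence $\e(R_\m)=\e\big(K[x_1,\dots,x_n]/(f^\ast)\big)=\deg f^\ast=v$. Square-free support plays no role in this step; it enters in computing the F-pure threshold.

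For the F-pure threshold, I would first note that $R$ is F-split: by the Fedder isomorphism applied at $\m$ together with Lemma~\ref{LemmaFactors}, writing $f=f_1\cdots f_t$ with the $f_i$ square-free supported on pairwise disjoint variable sets, one assembles from $\supp(f_i)$ a monomial of $f^{p-1}$ all of whose $x_k$-exponents are at most $p-1$, so $f^{p-1}\notin\m^{[p]}$; thus $\fpt(R_\m)$ and $\dfpt(R_\m)$ are defined. Combining the Fedder isomorphism with Definition~\ref{DefFptGlobalIdeal} and the equality $(f^{p^e}):_Sf=(f^{p^e-1})$ (as in Corollary~\ref{cor: CI}), the pair $(R_\m,\m^\lambda)$ is F-pure for a given $e$ precisely when $f^{p^e-1}\,\m^{\lfloor\lambda(p^e-1)\rfloor}\not\subseteq\m^{[p^e]}$, which may be tested in $S$ since $\m^{[p^e]}$ is $\m$-primary. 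Set $N_e=\max\{\,j\ge 0: f^{p^e-1}\m^{j}\not\subseteq\m^{[p^e]}\,\}$. Here is where square-free support is essential: $\deg_{x_k}(f)\le 1$, so $\deg_{x_k}(f^{p^e-1})\le p^e-1$ for every $k$; hence for any monomial $\nu\in\supp(f^{p^e-1})$ and any monomial $\mu$ of degree $j$ one has $\nu\mu\notin\m^{[p^e]}$ iff $\deg_{x_k}\mu\le p^e-1-\deg_{x_k}\nu$ for all $k$, the largest such $j$ being $n(p^e-1)-\deg\nu$ and being realized by $\mu=\prod_k x_k^{\,p^e-1-\deg_{x_k}\nu}$. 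Since $\gr_\m(S_\m)$ is a domain, $\ord_\m(f^{p^e-1})=(p^e-1)v$, i.e. the minimal degree of a monomial in $\supp(f^{p^e-1})$ is $(p^e-1)v$; therefore $N_e=(p^e-1)(n-v)$ \emph{exactly}, so $\fpt(R_\m)=n-v$.

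Putting these together, $\dfpt(R_\m)=\dim(R_\m)-\fpt(R_\m)=(n-1)-(n-v)=v-1=\e(R_\m)-1$, which is the per-maximal-ideal statement. For the global identity, $R$ is a hypersurface, hence Cohen--Macaulay and (by the previous paragraph) F-split, so $\p\mapsto\dfpt(R_\p)$ and $\p\mapsto\e(R_\p)$ are upper semi-continuous on $\Spec(R)$, as recorded in the remark preceding Theorem~\ref{ThmDfptOne} and in \cite[Theorem~4.3]{DSNBS}. Because $R$ is Jacobson, every nonempty closed subset of $\Spec(R)$ contains a closed point, so any nonempty superlevel set $\{\p:\varphi(\p)\ge c\}$ of an upper semi-continuous $\varphi$ contains a maximal ideal; hence the maximum of $\varphi$ over $\Spec(R)$ is attained at a maximal ideal. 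Applying this to both $\dfpt$ and $\e$ yields $\dfpt(R)=\max_\m\dfpt(R_\m)=\max_\m(\e(R_\m)-1)=\e(R)-1$. I expect the only genuine obstacle to be the exact evaluation $N_e=(p^e-1)(n-v)$: it is square-free supportedness, through the bound $\deg_{x_k}(f^{p^e-1})\le p^e-1$, that makes the lower and upper estimates for $N_e$ coincide, whereas for general $f$ the $x_k$-degrees of $f^{p^e-1}$ can exceed $p^e-1$ and this identity can fail.
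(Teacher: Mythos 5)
Your proposal is correct, and it reaches the pointwise identity by a genuinely different route than the paper. The paper first reduces to the origin exactly as you do, but then invokes \cite[Theorem 5.8]{DSNBS} to replace $R_\m$ by the standard graded hypersurface $S/\init(f)$ (this is legitimate because $\init(f)$ is again square-free supported, so $S/\init(f)$ is F-split by Fedder), and reads off $\dfpt$ from the top nonvanishing degree of $H^{n-1}_\eta(S/\init(f))$, namely $\deg \init(f)-n=\ord_\m(f)-n$, converting to $\e(R_\m)-1$ via $\e(R_\m)=\ord_\m(f)$. You instead compute $\fpt(R_\m)$ from scratch: the Fedder--Takagi--Watanabe criterion for pairs reduces F-purity of $(R_\m,\m^\lambda)$ at level $e$ to $f^{p^e-1}\m^{\lfloor\lambda(p^e-1)\rfloor}\not\subseteq\m^{[p^e]}$, and the square-free support bound $\deg_{x_k}(f^{p^e-1})\le p^e-1$ together with $\ord_\m(f^{p^e-1})=(p^e-1)\ord_\m(f)$ pins the threshold down to $\fpt(R_\m)=n-\ord_\m(f)$ exactly; your exchange of max over primes for max over maximal ideals via upper semi-continuity and the Jacobson property is also exactly what the paper's final sentence tacitly relies on. What your approach buys is self-containedness (no appeal to the deformation-to-the-initial-form theorem or to the graded local-cohomology formula for the F-pure threshold) plus the explicit value of $\fpt(R_\m)$; what the paper's approach buys is brevity and placement within the general DSNBS framework, where those two results do the combinatorial work for you. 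Two small presentational points: the equivalence ``F-pure at level $e$ iff $f^{p^e-1}\m^{j}\not\subseteq\m^{[p^e]}$'' should be attributed to the Fedder-type criterion for pairs (cf.\ \cite{TW2004}), with a word that lifts of elements of $\m_R^{j}$ differ by multiples of $f$ and $f\cdot f^{p^e-1}=f^{p^e}\in\m^{[p^e]}$, so the test may indeed be run on $\m_S^{j}$ alone; and Lemma~\ref{LemmaFactors} is not needed for F-splitness, since every monomial of $f^{p-1}$ already has all exponents at most $p-1$ because $f$ itself is square-free supported.
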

\begin{proof}
First, fix a maximal ideal $\m$ of $S$ containing $f$. Since $K$ is algebraically closed, by changing coordinates it suffices to assume that $\m=(x_1,\ldots,x_n)$  \cite[Lemma 3.2]{BMW}. We note that the lowest degree part of $f$, $\init(f)$, is a square-free supported homogeneous polynomial. 
By Fedder's criterion $S/\init(f)$ is F-split, so by \cite[Theorem 5.8]{DSNBS} we have that 
\[
\dfpt(R_\m)=\dfpt(S/\init(f)) = -\max\{j \mid [H^{n-1}_\eta(S/\IN(f))]_j \ne 0\}.
\]
Furthermore,  because $S/\IN(f)$ is a standard graded F-split $K$-algebra, we have that 
\begin{align*}
max\{j \mid [H^{n-1}_\eta(S/\IN(f))]_j \ne 0\} 
=\deg(\IN(f)) - n 
= \ord_\m (f) - n = \e(R_\m) - \dim (R_\m) - 1,
\end{align*}
where $\ord_\m(f) = \max\{t \mid f \in \m^t\}$ is the $\m$-adic order of $f$. The result now follows.
\end{proof}

 \begin{corollary} \label{CorDfptMany}
 Let $K$ be an algebraically closed  field of characteristic $p>0$, let $S=K[x_1,\ldots,x_n]$ and $f \in S$ be a square-free supported polynomial with irreducible factors $f_1,\ldots,f_t$. Let $Q=(f_1,\ldots,f_t)$, and $R=S/Q$. Then $\dfpt(R_\m)=\e((S/(f))_\m)-t$ for every maximal ideal $\m$ of $S$ which contains $Q$. Moreover, we have that $\dfpt(R) = \e(S/(f))-t$.
\end{corollary}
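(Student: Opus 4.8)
The plan is to adapt the proof of Theorem~\ref{ThmDfptOne} to the complete intersection $R=S/Q$. Fix a maximal ideal $\m$ of $S$ containing $Q=(f_1,\ldots,f_t)$. Since $K$ is algebraically closed and a translation $x_i\mapsto x_i+a_i$ preserves square-free supportedness \cite[Lemma~3.2]{BMW}, I may assume $\m=(x_1,\ldots,x_n)$, so that each $f_i\in\m$, the lowest degree form $\init(f_i)$ is a nonzero square-free supported homogeneous polynomial, and $\init(f)=\init(f_1)\cdots\init(f_t)$ is the leading form of $f=f_1\cdots f_t$. By Lemma~\ref{LemmaFactors} the sets $\vars(f_i)$ are pairwise disjoint, hence so are the sets $\vars(\init(f_i))$; a list of nonzero polynomials in pairwise disjoint variables is automatically a regular sequence, so $\init(f_1),\ldots,\init(f_t)$ is a regular sequence of forms in $\gr_\m(S_\m)\cong S$. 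Consequently $\gr_\m(R_\m)\cong\overline R$, where $\overline R\coloneqq S/(\init(f_1),\ldots,\init(f_t))$ is a standard graded complete intersection.

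Next I would verify that $\overline R$ is F-split, so that the tangent-cone identity for the defect used in Theorem~\ref{ThmDfptOne} (namely \cite[Theorem~5.8]{DSNBS}) gives $\dfpt(R_\m)=\dfpt(\overline R)$. By Fedder's criterion together with the complete intersection identity $(J^{[p]}:_S J)=(\init(f)^{p-1})+J^{[p]}$ for $J=(\init(f_1),\ldots,\init(f_t))$ (as in the proof of Corollary~\ref{cor: CI}), it suffices to find a monomial of $\init(f)^{p-1}$ outside $\m^{[p]}$, which is done exactly as in the proof of Theorem~\ref{ThmReducible}: for each $i$ pick $u_i\in\supp(\init(f_i))$ of maximal degree; since $\init(f_i)$ is square-free supported, $u_i$ is the only monomial of that degree supported on $\vars(u_i)$, so the coefficient of $u_i^{p-1}$ in $\init(f_i)^{p-1}$ is nonzero and $u_i^{p-1}\notin\m^{[p]}$. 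Because the $\vars(\init(f_i))$ are disjoint there is no cancellation between the factors, so $\prod_i u_i^{p-1}$ occurs in $\supp(\init(f)^{p-1})$ and lies outside $\m^{[p]}$. The remaining computation is purely graded: as in Theorem~\ref{ThmDfptOne}, $\dfpt(\overline R)$ is read off the top local cohomology of $\overline R$, which for a standard graded complete intersection equals its Castelnuovo--Mumford regularity $\reg(\overline R)=a(\overline R)+\dim\overline R$; writing $d_i=\deg\init(f_i)\ge 1$, the Hilbert series $\prod_i(1-s^{d_i})/(1-s)^n$ gives $a(\overline R)=\sum_i d_i-n$ and $\dim\overline R=n-t$, whence
\[
\dfpt(R_\m)=\dfpt(\overline R)=\sum_{i=1}^{t} d_i-t=\ord_\m(f)-t .
\]
Since $\ord_\m(f)=\e((S/(f))_\m)$, as recalled in the proof of Theorem~\ref{ThmDfptOne}, this is the asserted local identity. (One may also bypass the Hilbert series and quote Theorem~\ref{ThmDfptOne} directly, since $\reg(S/\init(f))=\deg\init(f)-1$ while $\reg(\overline R)=\deg\init(f)-t$, so $\dfpt(R_\m)=\dfpt((S/(f))_\m)-(t-1)$.)

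For the global statement I would argue by upper semicontinuity, as recalled before Theorem~\ref{ThmDfptOne}: $\p\mapsto\dfpt(R_\p)$ on $\Spec R$ and $\p\mapsto\e((S/(f))_\p)$ on $\Spec(S/(f))$ both attain their maxima, and, these being finitely generated $K$-algebras and hence Jacobson, the maxima are attained at maximal ideals. Moreover $\e(S/(f))$ is attained at a maximal ideal containing $Q$: since $\ord_\m(f)=\sum_i\ord_\m(f_i)$ and each $f_i$ involves only the variables in $\vars(f_i)$, one combines (the $\vars(f_i)$ being disjoint) maximal ideals maximizing the individual orders into a single maximal ideal of $S$ that maximizes $\ord_\m(f)$ and lies in every $V(f_i)$, hence in $V(Q)$. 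Combining with the local identity gives $\dfpt(R)=\max_{\m\supseteq Q}\dfpt(R_\m)=\max_{\m\supseteq Q}(\e((S/(f))_\m)-t)=\e(S/(f))-t$.

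I expect the one point genuinely needing attention to be the reduction $\dfpt(R_\m)=\dfpt(\overline R)$: in Theorem~\ref{ThmDfptOne} the tangent-cone identity \cite[Theorem~5.8]{DSNBS} is applied to a hypersurface, so one should check that it applies verbatim to the F-split standard graded complete intersection $\overline R$, and supply its F-splitness (reduced above to a short monomial count). Everything else --- the regular-sequence property of the leading forms from Lemma~\ref{LemmaFactors}, the $a$-invariant of a standard graded complete intersection, the equality $\ord_\m(f)=\e((S/(f))_\m)$, and the semicontinuity patching --- is routine or already available from the proof of Theorem~\ref{ThmDfptOne}.
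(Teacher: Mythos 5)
Your proposal reaches the correct formula, but by a genuinely different route from the paper, and the load-bearing step is exactly the one you flag at the end. The paper never deforms the complete intersection to its tangent cone: it uses the disjointness of the $\vars(f_i)$ (Lemma~\ref{LemmaFactors}) to decompose
$R\cong K[\vars(f_1)]/(f_1)\otimes_K\cdots\otimes_K K[\vars(f_t)]/(f_t)\otimes_K K[A]$,
applies the additivity of $\dfpt$ under tensor products \cite[Corollary 3.23]{DSNBS} (with the regular factor $K[A]$ contributing zero), invokes Theorem~\ref{ThmDfptOne} for each hypersurface factor, and then sums multiplicities via $\ord_\m(f)=\sum_i\ord_\m(f_i)$. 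In this way only the hypersurface instance of the tangent-cone identity \cite[Theorem 5.8]{DSNBS}, already used in Theorem~\ref{ThmDfptOne}, is ever needed. Your route instead requires that identity, together with the graded local-cohomology/$a$-invariant formula, for the F-split standard graded complete intersection $S/(\init(f_1),\ldots,\init(f_t))$; the way it is quoted in Theorem~\ref{ThmDfptOne}, with $H^{n-1}_\eta$ of a hypersurface, leaves it unclear whether the cited statement covers this generality, so you would have to check the reference or prove the complete-intersection version yourself --- the tensor decomposition is precisely what lets the paper sidestep this. If that step is supplied, your argument would in fact prove something slightly more general (it only needs the initial forms to be a regular sequence, not disjointness of variables), which is what the extra work would buy.

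The remaining ingredients of your proof are sound: disjoint variables do make $\init(f_1),\ldots,\init(f_t)$ a regular sequence, so $\gr_\m(R_\m)$ is the graded complete intersection (Valabrega--Valla); the Fedder/Corollary~\ref{cor: CI} monomial count gives F-splitness, although ``of maximal degree'' is vacuous since $\init(f_i)$ is homogeneous --- any $u_i\in\supp(\init(f_i))$ works by your variable-containment argument; the computation $\dim+a=\sum_i d_i-t$ and the identification $\ord_\m(f)=\e((S/(f))_\m)$ are correct. Your global step is essentially the paper's: the paper likewise reduces to showing that $\e(S/(f))$ is attained at a maximal ideal containing $Q$, and proves it by contradiction through the same splicing of points across the disjoint variable sets that you carry out directly.
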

\begin{proof}
Since $K$ is algebraically closed, and thanks to \cite[Lemma 3.2]{BMW}, after performing a change of coordinates we may assume that $\m=(x_1,\ldots,x_n)$. By Lemma \ref{LemmaFactors}, we have that 
\[
R\cong \frac{K[\vars(f_1)]}{(f_1)}\bigotimes_K \cdots\bigotimes_K \frac{K[\vars(f_t)]}{(f_t)}\bigotimes_K K[A],
\]
where $A=\{x_1,\ldots,x_n\}\smallsetminus \bigcup^t_{i=1}\vars(f_i)$.
Let $\m_i= (\vars(f_1))$ and $\eta=(A)$.
Then
\begin{align*}
\dfpt(R_\m)& =\dfpt\left(\left(\frac{K[\vars(f_1)]}{(f_1)}\right)_{\m_1}\right)+\cdots+\dfpt\left(\left(\frac{K[\vars(f_t)]}{(f_t)}\right)_{\m_t}\right) + \dfpt(K[A]_\eta) \\
&=\dfpt\left(\left(\frac{K[\vars(f_1)]}{(f_1)}\right)_{\m_1}\right)+\cdots+\dfpt\left(\left(\frac{K[\vars(f_t)]}{(f_t)}\right)_{\m_t}\right)\\
&=\e\left( \left(\frac{K[\vars(f_1)]}{(f_1)}\right)_{\m_1}\right)+\cdots+\e\left( \left(\frac{K[\vars(f_t)]}{(f_t)}\right)_{\m_t}\right)-t \\
& = \e((S/(f_1))_\m) + \cdots + \e((S/(f_t))_\m) - t \\
&=\e((S/(f))_\m)-t,
\end{align*}
where the first equality follows from a formula of the defect of the F-pure threshold for tensor products of $K$-algebras \cite[Corollary 3.23]{DSNBS}, the second from the fact that $K[A]$ is regular, the third from Theorem \ref{ThmDfptOne}, and the last one from the associativity formula for multiplicity. 
Since the formula holds for all maximal ideals containing $Q$, it follows that 
\[
\dfpt(R) = 
\max\{\e((S/(f))_\m) \mid \m \ni Q\} -t 
\leq \max\{\e((S/(f))_\m) \mid \m \ni f\} - t = \e(S/(f))-t.
\]
Thus, it suffices to show that $\e(S/(f))$ is achieved at a maximal ideal $\m$ containing $Q$. 

We argue by contradiction.
Let $\m$ be a maximal ideal such that  $\e((S/(f))_\m) = e(S/(f))$, and assume without loss of generality that $f_1 \notin \m$. Then $\e((S/(f_1))_\m) = 0$ and thus $\e((S/(f))_\m) = \e((S/(g))_\m)$ with $g=f/f_1$. Let $\n$ be a maximal ideal of $S$ containing $f_1$. Let $B=\vars(f_1)$ and $C=\{x_1,\ldots,x_n\} \smallsetminus B$. 
By Lemma~\ref{lemma1} $\a=(\m \cap K[C])S + (\n \cap K[B])S$
is a maximal ideal. Then with this choice we have 
\begin{align*}
\e((S/(f))_\a) &  = \e((S/(g))_\a) + \e((S/(f_1))_\a) \\ 
& = \e((S/(g))_\m) + \e((S/(f_1))_\n) \\
& = \e((S/(f))_\m) + \e((S/(f_1))_\n) > \e((S/(f))_\m).
\end{align*}
This gives the desired contradiction, and completes the proof.

\end{proof}

\begin{corollary}
Let $K$ be an algebraically closed field, and $g, h\in S=K[x_1,\ldots,x_n]$ be square-free supported homogeneous polynomials satisfying $\deg(h)=1 + \deg(g)$.
Let $\ell=a_1 x_1 +\cdots+a_n x_n+1$ for some $a_1,\ldots, a_n\in K$. If $g$ is irreducible and does not
divide $h$, then 
\[
\dfpt(S/(g\ell + h))=\e(S/(g\ell + h))-1.
\]
\end{corollary}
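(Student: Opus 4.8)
The plan is to reduce to the homogeneous square-free supported case handled in Theorem~\ref{ThmDfptOne}, via the homogenization trick from the proof of Corollary~\ref{CorSqfreeModification} (here, as throughout this section, $\operatorname{char} K = p>0$, so that $\dfpt$ is defined). Write $f = g\ell+h$. By Corollary~\ref{CorSqfreeModification} the polynomial $f$ is (geometrically) irreducible, so $R := S/(f)$ is a domain; it is also F-rational, hence, being a hypersurface, strongly F-regular, and F-finite since $K=\overline K$ is perfect, so $\dfpt(R)$ is defined. Put $\widetilde\ell = a_1x_1+\cdots+a_nx_n+z$ and $\widetilde f = g\widetilde\ell+h\in\widetilde S := K[x_1,\dots,x_n,z]$; in the coordinates $y_i=x_i$, $y=\widetilde\ell$ the polynomial $\widetilde f = gy+h$ is square-free supported and homogeneous of degree $\deg h = \deg g+1\ge 2$. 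Hence Theorem~\ref{ThmDfptOne} applies to $\widetilde R := \widetilde S/(\widetilde f)$ and gives, for \emph{every} maximal ideal $\m$ of $\widetilde R$, the pointwise equality $\dfpt(\widetilde R_\m) = \e(\widetilde R_\m)-1$.

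Next I would transport this equality to $R$. Exactly as in Corollary~\ref{CorSqfreeModification}, the automorphism of $\widetilde S[z^{-1}]$ sending $x_i\mapsto x_iz^{-1}$, $z\mapsto z$ carries $(f)$ to $(\widetilde f)$ and induces an isomorphism $R[z,z^{-1}]\cong\widetilde R[z^{-1}]$ (note $z\notin(\widetilde f)$ since $\deg\widetilde f\ge 2$, so $z$ is a nonzerodivisor on $\widetilde R$). Now I compare the invariants on the two sides. Because $K$ is algebraically closed, every maximal ideal of $R[z,z^{-1}]$ has the form $\m R[z,z^{-1}]+(z-a)$ with $\m$ maximal in $R$ and $a\in K\smallsetminus\{0\}$, and the localization there is isomorphic to $R_\m$ with one variable adjoined, localized at the closed point lying over $\m$. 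Adjoining a variable and localizing at such a point leaves the Hilbert--Samuel multiplicity unchanged and, by the tensor-product formula \cite[Corollary~3.23]{DSNBS} together with $\dfpt$ of a regular local ring being $0$ (as in the proof of Corollary~\ref{CorDfptMany}), leaves $\dfpt$ unchanged. Taking maxima over maximal ideals yields $\dfpt(R[z,z^{-1}])=\dfpt(R)$ and $\e(R[z,z^{-1}])=\e(R)$. On the other side, the maximal ideals of $\widetilde R[z^{-1}]$ are precisely those maximal ideals $\m$ of $\widetilde R$ with $z\notin\m$, and $\widetilde R[z^{-1}]$ localized at such a maximal ideal is just $\widetilde R_\m$.

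Combining these identifications with the pointwise equality from Theorem~\ref{ThmDfptOne},
\[
\dfpt(R)=\dfpt(\widetilde R[z^{-1}])=\max_{\m\,\not\ni\,z}\dfpt(\widetilde R_\m)=\max_{\m\,\not\ni\,z}\bigl(\e(\widetilde R_\m)-1\bigr)=\e(\widetilde R[z^{-1}])-1=\e(R)-1,
\]
which is the assertion.

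The step I expect to be the main (if mild) obstacle is precisely this last comparison: the maximum of $\e$ (and of $\dfpt$) over all maximal ideals of $\widetilde R$ is attained at the cone point, which lies on $\{z=0\}$ and is therefore \emph{removed} when we pass to $\widetilde R[z^{-1}]\cong R[z,z^{-1}]$. What rescues the argument is that Theorem~\ref{ThmDfptOne} delivers the equality $\dfpt=\e-1$ not just globally but at every single maximal ideal, so it is inherited by the open subset $\{z\ne 0\}$; after that, the isomorphism with $R[z,z^{-1}]$ and the routine ``adjoin-a-variable-and-localize'' computations for $\e$ and $\dfpt$ close the argument. Secondary points to verify are that all the rings in play are F-finite and F-split (so that $\dfpt$ is legitimately defined) and the bookkeeping of maximal ideals of the Laurent and localized rings over the algebraically closed field $K$.
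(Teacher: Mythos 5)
Your proof is correct and takes essentially the same route as the paper: homogenize to $\widetilde f = gy+h$, apply Theorem~\ref{ThmDfptOne} at \emph{every} maximal ideal of $\widetilde R$, transport along the automorphism identifying $R[z,z^{-1}]$ with $\widetilde R[z^{-1}]$, and check that $\e$ and $\dfpt$ are unchanged by adjoining an invertible variable and localizing. The only differences are bookkeeping: the paper argues pointwise at each maximal ideal $\m$ of $R$ via the specific point $z=1$, using \cite[Proposition~5.12]{DSNBS} for the $\dfpt$ invariance and an explicit associated-graded computation for $\e$, whereas you compare global maxima over maximal ideals and invoke the tensor-product formula \cite[Corollary~3.23]{DSNBS} instead.
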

\begin{proof}
Fix $a_1, \ldots, a_n \in K$ and let $f=g\ell+h$ and $R=S/(f)$. Following the proof of Corollary~\ref{CorSqfreeModification},  let $\widetilde{\ell} = a_1 x_1 +\cdots+a_n x_n +z$ and $\widetilde{f}= g\widetilde{\ell} +h\in \widetilde{S}=K[x_1,\ldots, x_n,z]$. Recall that $\widetilde{f}$ is square-free supported and homogeneous with respect to the variables
$y_1,\ldots, y_n,y$, so if we let $\widetilde{R} = \widetilde{S}/(\widetilde{f})$, then it follows that $\dfpt(\widetilde{R_\n})=\e(\widetilde{R_\n})-1$
for every maximal ideal $\n\subseteq \widetilde{R}$ by Theorem~\ref{ThmDfptOne}. 

Let $\m$ be a maximal ideal of $R$, $\n= (\m, z-1)R[z,z^{-1}]$, $B = (R[z, z^{-1}])_\n$.
Note that $R_\m \to B \cong (R\otimes_K K[z])_\n$ is a flat local map with regular closed fiber, so $\dfpt(R_\m)=\dfpt(B)$ \cite[Proposition 5.12]{DSNBS}. 
Furthermore, $\e(R_\m) = \e(B)$ since 
\[
\dim_K \left(\frac{\n^kB}{\n^{k+1}B}\right)
= \dim_K \left( \frac{\sum_{a = 0}^k \m^{n-a} (z-1)^{a}}{\sum_{a = 0}^{k+1} \m^{n-a} (z-1)^{a}}\right)
= \sum_{i = 0}^k \dim_K \left(\frac{\m^i}{\m^{i+1}}\right).
\]
Let $T=\widetilde{S}[z^{-1}]$, and let $\phi\colon T \to T$ be the $K$-algebra automorphism defined by $x_i \mapsto x_iz^{-1}$ and $z \mapsto z$. As noted in the proof of Corollary \ref{CorSqfreeModification}, this map induces an isomorphism $B \to \widetilde{R}_{\n'}$ with $\n'=\phi(\n)$, and thus 
\[
\dfpt(B)=\dfpt(\widetilde{R}_{\n'})=\e(\widetilde{R}_{\n'})-1=\e(B)-1.
\]
It follows that $\dfpt(R_\m) = \e(R_\m) - 1$, and because $\m$ was any maximal ideal of $R$ we also obtain $\dfpt(R) = \e(R)-1$.
\end{proof}

\bibliographystyle{alpha}
\bibliography{References}

\end{document}